\newtheorem{theorem}{Theorem}[section]
\newtheorem{example}{Example}[section]
\newtheorem{lemma}{Lemma}[section]
\newtheorem{remark}{Remark}[section]
\newtheorem{corollary}{Corollary}[section]
\definecolor{tabclr}{cmyk}{0,0,1,0}
\begin{document}

\title{Error estimates on a finite volume method for diffusion problems with interface on Eulerian grids}

\author[XTU1]{Jie Peng}
\ead{xtu\_pengjie@163.com}

\author[XTU1,XTU2]{Shi Shu}
\ead{shushi@xtu.edu.cn}

\author[XTU1]{HaiYuan Yu\corref{cor}}
\ead{spring@xtu.edu.cn}

\author[XTU1,XTU3]{Chunsheng Feng}
\ead{fengchsh@xtu.edu.cn}

\author[YS]{Mingxian Kan}
\ead{kanmx@caep.cn}

\author[YS]{Ganghua Wang}
\ead{wanggh@caep.cn}

\cortext[cor]{Corresponding author}
\address[XTU1]{School of Mathematics and Computational Science, Xiangtan University, Xiangtan 411105, China}
\address[XTU2]{Hunan Key Laboratory for Computation and Simulation in Science and Engineering,
Xiangtan University, Xiangtan 411105, China}
\address[XTU3]{Guangdong Provincial Engineering Technology Research Center for Data Science, Guangzhou 510631, China}
\address[YS]{Institute of Fluid Physics, CAEP, P. O. Box 919 105, Mianyang 621900, China}

\begin{abstract}
The finite volume methods are frequently employed in the
discretization of diffusion problems with interface. In this paper,
we firstly present a vertex-centered MACH-like finite volume method
for solving stationary diffusion problems with strong discontinuity
and multiple material cells on the Eulerian grids. This method is
motivated by Frese  [No. AMRC-R-874, Mission Research Corp.,
Albuquerque, NM, 1987]. Then, the local truncation error and global
error estimates of the degenerate five-point MACH-like scheme are
derived by introducing some new techniques. Especially under some
assumptions, we prove that this scheme  can reach the asymptotic
optimal error estimate $O(h^2 |\ln h|)$   in the maximum norm.
Finally, numerical experiments verify theoretical results.
\end{abstract}

\begin{keyword}
diffusion problems with interface; finite volume method; Eulerian grids; error estimates
\MSC[2010] 65N08 \sep 65N12 \sep 65N15
\end{keyword}

\maketitle

\section{Introduction}\setcounter{equation}{0}

Diffusion problems with interface are
 widely applied in multi-fluid hydrodynamic, fluid-solid coupling mechanics and many other
scientific and engineering computation fields.
The finite volume method (FVM), which presents local conservation
and obvious advantage to handle physical models with complex characteristics very well,
becomes an important discretization method for solving partial differential equations. 

The finite volume methods (FVMs) based on Lagrangian and Eulerian
grids are two commonly used methods for solving diffusion problems.
The moving interface can  be accurately described as we use the
former, but the calculation is hard to execute on highly distorted
grids. There are a lot of researches interested in these FVMs, e.g.
\cite{1981DSK,2000FH,2006JD,2007YGW,2012RAK,2014JD}. An advantage of
the FVMs in the Eulerian frame is the reasonable shape of the
computational grids, such as the uniform grids. But for the
diffusion problems with strong discontinuity on the internal
interface, the difficulty lies in dealing with the cells involving
multiple material properties (\cite{2008SYK}). Many researchers
investigated this kind of FVMs,  e.g.
\cite{1999RE,2015LZ,2006SS,2013NCY,2013ML,2009MCR,2006MO,2001RE,2001LR,2015WT,2010LYH,2012LYH}.
Ewing constructed an immersed finite volume element method (FVEM) on
a uniform triangle grid, and presented the optimal error estimate in
the energy norm (\cite{1999RE,2015LZ}).
Shu  
established the superconvergence theory of a bilinear FVEM  for
diffusion problems with smooth coefficients on the rectangular grids
(\cite{2006SS,2013NCY}).
Ma presented a recovery-based posterior error estimator for FVMs to solve elliptic interface problems in \cite{2013ML}.
There also have a lot of  works on the quadrilateral grids.
For example, Oevermann focused on the two-dimensional (2D) and three-dimensional (3D) problems with
discontinuous fluxes across the interface, and constructed a linear immersed FVEM in \cite{2009MCR} and \cite{2006MO}.
Ewing constructed a series of FVMs with different average methods for diffusion problems under homogeneous jump conditions,
and numerical experiments were carried on to confirm the approximation order of these methods in \cite{2001RE}.
Luce derived a kind of FVM by using a decomposition technique in \cite{2001LR}.
Wang constructed a fourth-order compact FVM and derived some high accuracy post-processing formulas in \cite{2015WT}.
Li proved the optimal $L^2$ error estimate for  bilinear and biquadratic FVMs with smooth coefficients
under a mesh restriction of $h^2$-parallelogram, respectively (\cite{2010LYH,2012LYH}).
In addition, there are many works for the error estimates of finite element method to solve diffusion
problems with smooth coefficients,  for example,  Nie derived the optimal
$L^2$ error estimate of linear finite element scheme for the
diffusion problem with nonlocal boundary in \cite{2014FEM}. However, there have been
few strict theoretical analyses of the global error estimation for
Eulerian FVMs to solve diffusion problems with strong discontinuity
and multiple material cells.


%
%

In the late 1980's, M.H. Frese presented a finite volume method for diffusion equations in 2D magnetohydrodynamic problems
on the quadrilateral grids (\cite{MHF1987}). The corresponding software packages named MACH2 and MACH3 for 2D and 3D problems have been successfully developed, respectively, by
Philips Lab/WSP, Kirtland Air Force Base (\cite{MHF1987,SHMP1993}), and widely used in the numerical simulations of liner implosion system, plasma thrusters and so on (see, e.g. \cite{s-0001,PFS1998,l-0001,m-0001,2013DREW}).
For simplicity of presentation, we denote this finite volume method as MACH FVM.
However, to our knowledge, we have not found the strict error theories of it for diffusion problems with strong discontinuity and multiple material cells on the Eulerian grids,
which urges us to study it.

In this paper, we present a vertex-centered MACH-like FVM on the quadrilateral grids for
stationary diffusion problems with interface.
In particular, for the square grids, this kind of nine-point scheme is degenerated into a five-point scheme.
It is worth pointing out that many classical nine-point FVMs (see, for example, \cite{1980LDY}) are always degenerated
into a five-point stencil which looks like ``+",
while the stencil of the five-point MACH-like scheme looks like ``$\times$".
Therefore, this adds an extra difficulty for error estimates.
The other important work of this paper is that we present the strict theoretical analysis for
the five-point MACH-like scheme.
It is divided into two parts.
On the one hand, we discuss the local truncation error. The main
difficulty results from the discontinuous coefficients of the
interface. By the homogeneous jump conditions on the interface and
Taylor expansions, we derive that the local truncation error of the
interior nodes adjacent to the interface is $O(h)$. Furthermore, we
get the local truncation error $O(h^2)$ under the Assumption {\bf I}
(i.e., we use harmonic average method for the diffusion coefficients
and the second derivative function with respect to the tangent
direction of the exact solution is equal to zero on the interface).
On the other hand, we focus on the global error estimates. Firstly,
the error difference equations are decomposed into two relatively
simple ones. Then, by using the discrete sine transform
(\cite{2005SN,2007MV}) and combining with some analytical
techniques, we convert these 2D difference equations into two kinds
of one-dimensional (1D) difference equations, and the estimates of
these difference equations are deduced. Hence, we demonstrate that
the global error estimation of the five-point scheme is $O(h |\ln
h|)$ in the maximum norm. Furthermore, under the Assumption {\bf I},
we obtain the asymptotic optimal error estimate $O(h^2 |\ln h|)$. In
addition, we investigate the approximation of the five-point
MACH-like scheme by several typical numerical examples. Numerical
results are carried on to confirm the theoretical ones.


The paper is organized as follows. In Section \ref{sec:2}, we describe the construction of the
MACH-like finite volume method on an arbitrary quadrilateral grid, and present a five-point MACH-like scheme for special.
Section \ref{sec:3} presents the local truncation error for the five-point scheme.
Then the global error estimation in the maximum norm is derived in Section \ref{sec:4}.
After that, the accuracy of the five-point MACH-like scheme is verified numerically.
In Section \ref{sec:6} we draw some conclusions on our works.

\section{\label{sec:2}Model problem and finite volume scheme}\setcounter{equation}{0}

We consider the following interface problem
\begin{eqnarray}\label{Benchmark-Eq1}
\left\{\begin{array}{ll}
    -\nabla\cdot(\kappa \nabla u) + u = f,  & \mbox{in}~\Omega, \\ 
    u = 0, & \mbox{on}~\partial \Omega,
\end{array}\right.
\end{eqnarray}
where $\Omega \in \mathbb{R}^2$ is a bounded polygonal domain  with boundary $\partial \Omega$,
$f$ is a given function and the diffusion coefficient $\kappa$ is positive and piecewice
constant on polygonal subdomains of $\Omega$ with possible large discontinuities across subdomain boundaries which is simply interface for short.
Let $\kappa = \kappa_i >0~\mbox{in}~D_i$, for $i=1,2,\cdots,J$. Here $\{D_i\}_{i=1}^J$ is a partition of $\Omega$, where $D_i$
is an open polygonal subdomain.

Denote $\Gamma_{i,j} = \bar{D}_i \cap \bar{D}_j(i\neq j)$ and $\Gamma = \cup_{i,j=1}^J \Gamma_{i,j}$. We assume that
\begin{eqnarray}\label{Benchmark-Eq3}
[u] = [\kappa \frac{\partial u}{\partial \vec{n}}] = 0,~~\mbox{on}~\Gamma,
\end{eqnarray}
where $\vec{n}$ is the unit outward normal vector on $\Gamma$ and $[\zeta](\zeta = u, \kappa \frac{\partial u}{\partial \vec{n}})$
denotes the difference of the  right and left limits of $\zeta$ at any point of $\Gamma$.

Let $\mathcal{Q}_h$ be a structured quadrilateral partition of $\Omega$, and
$
X = \{X_{i,j}=(x_i,y_j),~i=0,1,\cdots,N_x, j=0,1,\cdots,N_y\},
$
where $N_x, N_y$ are given positive integers.

Next, We derive a kind of FVM with vertex unknowns for solving \eqref{Benchmark-Eq1} and \eqref{Benchmark-Eq3},
which is based on the method of Michael H. Frese in \cite{MHF1987}.
We denote it as MACH-like FVM for convenience.

For any given interior node $X_{i,j}, i=1,2,\cdots,N_x-1, j=1,2,\cdots,N_y-1$ (see Fig. \ref{MACH-vol}). We denote
open area $Q_l(l=1,2,3,4)$ as the $l$-th quadrilateral element adjacent to $X_{i,j}$,
where $O_l, l=1,2,3,4$ are the midpoints of the grid sides $X_{i,j-1}X_{i,j}, X_{i+1,j}X_{i,j}, X_{i,j+1}X_{i,j}$ and $X_{i-1,j}X_{i,j}$, respectively.
The quadrangle $X_{i,j}O_{l-1}A_lO_l$ is a parallelogram, where $A_l \in Q_l$.
Thus, the control volume of $X_{i,j}$ is defined as the octagon area which is surrounded by $A_l, O_l(l=1,2,3,4)$. We denote it briefly by $V_{i,j}$.

Fig. \ref{MACH-vol} (a) and (b) show the  typical cases which have an interface in $Q_{l}(l=1,2,3,4)$ or not.

\begin{figure}[t]
\centering
   \includegraphics[scale=0.45]{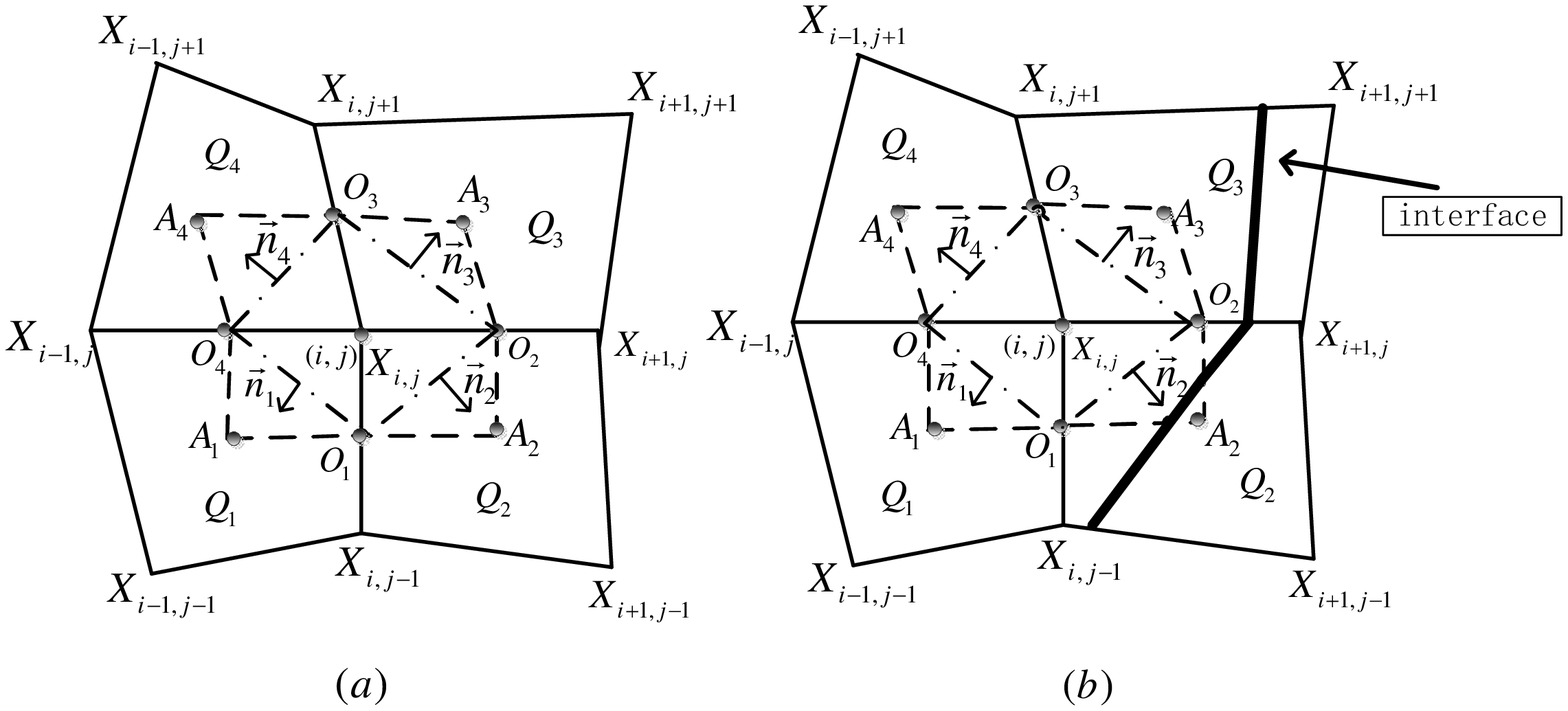}
   \caption{(a) No interface in $Q_l(l=1,2,3,4)$. (b) Only one interface in $Q_2$ and $Q_3$.}
  \label{MACH-vol}
\end{figure}

Integrating \eqref{Benchmark-Eq1} over $V_{i,j}$, the equation \eqref{Benchmark-Eq1} leads to
\begin{equation*}
\int_{V_{i,j}} (-\nabla \cdot (\kappa \nabla u) + u) dX = \int_{V_{i,j}} f dX.
\end{equation*}

By the divergence theorem and the continuity condition of the flux $\kappa \frac{\partial u}{\partial \vec{n}}$, we get
\begin{equation}\label{equation-vol}
- \int_{\partial V_{i,j}} \kappa \frac{\partial u}{\partial \vec{n}} dS + \int_{V_{i,j}} u dX  = \int_{V_{i,j}} f dX,
\end{equation}
where $\partial V_{i,j}$ is the boundary of $V_{i,j}$ and $\vec{n}$ is the unit outward  normal vector of $\partial V_{i,j}$.

Denote $\vec{n}_l$ as the unit outward normal vector on the side $O_{l-1}O_l$ for $l=1,2,3,4$ (see Fig. \ref{MACH-vol}).
Combining \eqref{equation-vol} and the rectangular integral  formula,  we can obtain
\begin{eqnarray}\label{2d-5pt-eq-3}\nonumber
 &&- \kappa_1^* (\nabla u)|_{Q_1} \cdot \vec{n}_1 |O_4O_1|  -  \kappa_2^* (\nabla u)|_{Q_2} \cdot \vec{n}_2 |O_1O_2| \\
 &&  - \kappa_3^* (\nabla u)|_{Q_3} \cdot \vec{n}_3 |O_2O_3|  - \kappa_4^* (\nabla u)|_{Q_4} \cdot \vec{n}_4 |O_3O_4|  + |V_{i,j}| u_{i,j}  \approx \int_{V_{i,j}} f dX,
\end{eqnarray}
where $\kappa_l^*$ is the averaging of $\kappa$ over $Q_l$ and $(\nabla u)|_{Q_l}$ is the integral average value over $Q_l$.

Hence, from \eqref{2d-5pt-eq-3}, we know that it is an urgent task to get the approximate calculation formula of
$(\nabla u)|_{Q_l}$ for $l=1,2,3,4$.

Any open area $Q_l$ is shown in Fig. \ref{MACH-dual-vol}, where
$X_m^l(m=1,2,3,4)$ is the vertex of $Q_l$,
$\vec{n}_m^c(m=1,2,3,4)$ is the unit outward normal vector on the side $X_m^lX_{m+1}^l(m=1,2,3,4,X_5^l = X_1^l)$.
\begin{figure}[t]
\centering
   \includegraphics[scale=0.55]{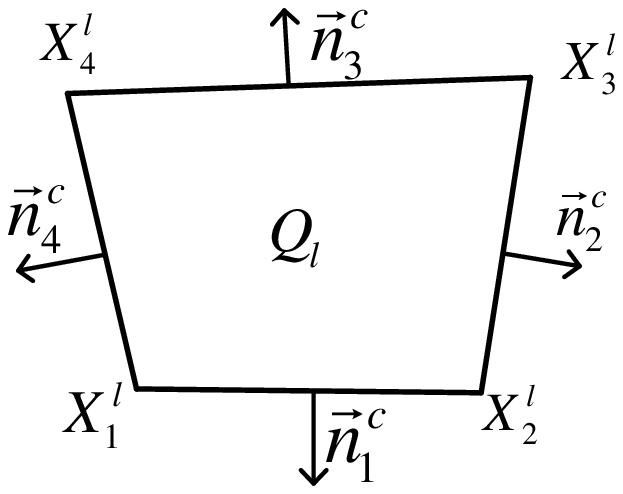}
   \caption{Illustration for $Q_l$. }
  \label{MACH-dual-vol}
\end{figure}

Motivated by the finite volume method in M. H. Frese \cite{MHF1987}, we can obtain
\begin{eqnarray}\label{2d-5pt-eq-6}\nonumber
(\nabla u)|_{Q_l} &\approx& \frac{1}{|Q_l|}[\frac{1}{2}(u(X_1^l) + u(X_2^l))|X_1^lX_2^l| \vec{n}_1^c +  \frac{1}{2}(u(X_2^l) + u(X_3^l))|X_2^lX_3^l| \vec{n}_2^c \\
&& + \frac{1}{2}(u(X_3^l) + u(X_4^l))|X_3^lX_4^l| \vec{n}_3^c + \frac{1}{2}(u(X_4^l) + u(X_1^l))|X_4^lX_1^l| \vec{n}_4^c],
\end{eqnarray}
where $|\zeta|$ is the length or area of $\zeta$.

The option of $\kappa_l^*(l=1,2,3,4)$ is another critical element to construct the MACH-like FVM.
For any given $Q_l(l=1,2,3,4)$, if $Q_l$ is a single material area (see Fig. \ref{MACH-vol}(a) $Q_k(k=1,2,3,4)$ as an example),
without loss of generality, assuming that $Q_l \subset D_m(1\le m\le J)$, then it follows that  $\kappa_l^* = \kappa_m$.
If $Q_l$ is a multiple material area (see, for example, $Q_2, Q_3$ in Fig. \ref{MACH-vol}(b)),
then $\kappa_l^*$ must be obtained from an appropriate average method, such as arithmetic average method, harmonic average method and so on.

Substituting  \eqref{2d-5pt-eq-6} into \eqref{2d-5pt-eq-3}, and combining with the proper $\kappa_l^*(l=1,2,3,4)$, we get the MACH-like FVM of \eqref{Benchmark-Eq1} and \eqref{Benchmark-Eq3} at $X_{i,j}(i=1,2,\cdots,N_x-1, j=1,2,\cdots,N_y-1)$.

\begin{remark}
The MACH-like FVM is always a compact nine-point stencil.
\end{remark}

In particular, we consider a uniform quadrilateral grid in $\Omega = (a,b)\times (c,d)$, where
\begin{eqnarray*}
&& h_x = (b-a)/N_x, ~~h_y = (d-c)/N_y,\\
&& (x_i, y_j) = (a + ih_x,c + jh_y),~i=0,1,\cdots,N_x, j=0,1,\cdots,N_y.
\end{eqnarray*}
The MACH-like FVM at any interior node $X_{i,j}$ express as follows
\begin{eqnarray}\label{dis-9pt-scheme}\nonumber
&&a^{i,j}_{1} u_{i-1,j-1} + a^{i,j}_{2} u_{i,j-1} + a^{i,j}_{3} u_{i+1,j-1} +  a^{i,j}_{4} u_{i-1,j} + a^{i,j}_{5} u_{i,j} \\
&& + a^{i,j}_{6} u_{i+1,j} + a^{i,j}_{7} u_{i-1,j+1} + a^{i,j}_{8} u_{i,j+1} + a^{i,j}_{9} u_{i+1,j+1} = h_x h_y f_{i,j},
\end{eqnarray}
where
\begin{eqnarray}\label{express-aij}
\left\{
\begin{array}{l}
a^{i,j}_1 = -\frac{1}{4}(\frac{h_y}{h_x} + \frac{h_x}{h_y})\kappa_1^*, \\
a^{i,j}_2 = -\frac{1}{4}(-\frac{h_y}{h_x} + \frac{h_x}{h_y})(\kappa_1^* + \kappa_2^*),\\
a^{i,j}_3 = -\frac{1}{4}(\frac{h_y}{h_x} + \frac{h_x}{h_y})\kappa_2^*,\\
a^{i,j}_4 = -\frac{1}{4}(\frac{h_y}{h_x} - \frac{h_x}{h_y})(\kappa_1^* + \kappa_4^*),\\
a^{i,j}_5 = \frac{1}{4}(\frac{h_y}{h_x} + \frac{h_x}{h_y})(\kappa_1^* + \kappa_2^* + \kappa_3^* + \kappa_4^*) + h_x h_y,\\
a^{i,j}_6 = -\frac{1}{4}(\frac{h_y}{h_x} - \frac{h_x}{h_y})(\kappa_2^* + \kappa_3^*), \\
a^{i,j}_7 = -\frac{1}{4}(\frac{h_y}{h_x} + \frac{h_x}{h_y})\kappa_4^*,\\
a^{i,j}_8 = -\frac{1}{4}(-\frac{h_y}{h_x} + \frac{h_x}{h_y})(\kappa_3^* + \kappa_4^*),\\
a^{i,j}_9 = -\frac{1}{4}(\frac{h_y}{h_x} + \frac{h_x}{h_y})\kappa_3^*,\\
f_{i,j}  = \frac{1}{h_x h_y}\int_{V_{i,j}} f dX.
\end{array}
\right.
\end{eqnarray}

Currently, MACH scheme is successfully applied in the numerical simulation of liner implosion system, plasma thrusters and so on (see, e.g. \cite{s-0001,l-0001,m-0001}).
However, there is few error theories about this scheme, and it is always for the local truncation error.
Especially, the strict error theories for the stationary diffusion problems with multiple material cells on Eulerian grids haven't been seen yet.
In the following sections, we will establish a rigorous theoretical analysis of the MACH-like scheme, which is constructed for a simplified model of \eqref{Benchmark-Eq1} and \eqref{Benchmark-Eq3}.

This simplified model only considers two subdomains. Suppose that $\Omega = (0,1)\times (0,1) = D_1 \cup D_2 \cup \Gamma$ and the diffusion coefficient
\begin{eqnarray}\label{def-kappa}
\kappa = \left\{\begin{array}{ll}
\kappa^- \ge 1, & \mbox{in}~D_1,\\
1, & \mbox{in}~D_2,
\end{array}\right.
\end{eqnarray}
where  $D_1 = (0,\frac{1}{2})\times(0,1)$, $D_2 = (\frac{1}{2}, 1)\times(0,1)$ and $\kappa^-$ is a positive constant.

In addition, we assume that $\mathcal{Q}_h$ is a square mesh of $\Omega$, where
$N_x = N_y = N = 2M+1(M \in \mathcal{Z}^+)$,
$h_x = h_y = h$,
introduce the grid nodes and the interior nodes indicated set
$S = \{(i,j), i,j=0,1,\cdots,N\}$ and $S_0 = \{(i,j), i,j=1,2,\cdots,N-1\}$.
Let the indicated set of the interior nodes which are adjacent to the interface or not be
\begin{eqnarray}\label{def-SI-SB}
S_B = S_1^B \cup S_2^B~~and~~S_I = S_1^I \cup S_2^2,
\end{eqnarray}
respectively, where
\begin{eqnarray*}
&& S_1^{I} = \{(i,j), i=1,2,\cdots,M-1, j=1,2,\cdots,N-1\},\\
&& S_2^{I} = \{(i,j), i=M+2,M+3,\cdots,N-1, j=1,2,\cdots,N-1\},\\
&& S_1^{B} = \{(i,j), i=M, j=1,2,\cdots,N-1\},\\
&& S_2^{B} = \{(i,j), i=M+1, j=1,2,\cdots,N-1\}.
\end{eqnarray*}

Denote $\kappa^*$ as the average value of  the diffusion coefficient in the multiple material cells.
From \eqref{dis-9pt-scheme} and \eqref{express-aij}, we can obtain the five-point MACH-like scheme of the simplified model as follows
\begin{eqnarray}\label{2D-MACH-5pt-schemes}
\left\{\begin{array}{ll}
L_h u_{i,j} = h^2 f_{i,j}, &(i,j) \in S_0,\\
u_{0,j} = u_{N, j} = u_{i,0} = u_{i,N}=0, &(i,j) \in S,
\end{array}\right.
\end{eqnarray}
where
\begin{eqnarray} 
\label{elliptic-operator}
L_h u_{i,j} = a_1^{i,j} u_{i-1,j-1} + a_2^{i,j} u_{i+1,j-1} + a_3^{i,j} u_{i,j} + a_4^{i,j} u_{i-1,j+1} + a_5^{i,j} u_{i+1,j+1},~f_{i,j} = f(x_i,y_j),
\end{eqnarray}
and
\begin{eqnarray*}
\left\{
\begin{array}{ll}
a_1^{i,j} = a_2^{i,j} = a_4^{i,j} = a_5^{i,j} = -\frac{1}{2} \kappa^-,~a_3^{i,j} = 2 \kappa^- + h^2,& (i,j) \in S_1^I,\\
a_1^{i,j} = a_4^{i,j} = -\frac{1}{2}\kappa^-,~a_2^{i,j} = a_5^{i,j} = -\frac{1}{2} \kappa^*,~a_3^{i,j} = \kappa^- + \kappa^* + h^2,& (i,j) \in S_1^B,\\
a_1^{i,j} = a_4^{i,j} = -\frac{1}{2}\kappa^*,~a_2^{i,j} = a_5^{i,j} = -\frac{1}{2},~a_3^{i,j} =\kappa^* + 1  + h^2, & (i,j) \in S_2^B,\\
a_1^{i,j} = a_2^{i,j} = a_4^{i,j} = a_5^{i,j} = -\frac{1}{2},~a_3^{i,j} = 2 + h^2,& (i,j)\in S_2^I.
\end{array}\right.
\end{eqnarray*}

The stencil of this five-point scheme which looks like ``$\times$" is shown in Fig. \ref{MACH-5pt} (a).
It is worth pointing out that many frequently-used nine-point FVMs, such as \cite{1980LDY}, are degenerated into a five-point stencil which looks like ``+" (see Fig. \ref{MACH-5pt}(b)).
Hence, this difference will bring some new difficulties to the following error analysis.
\begin{figure}[t]
\centering
   \includegraphics[scale=0.4]{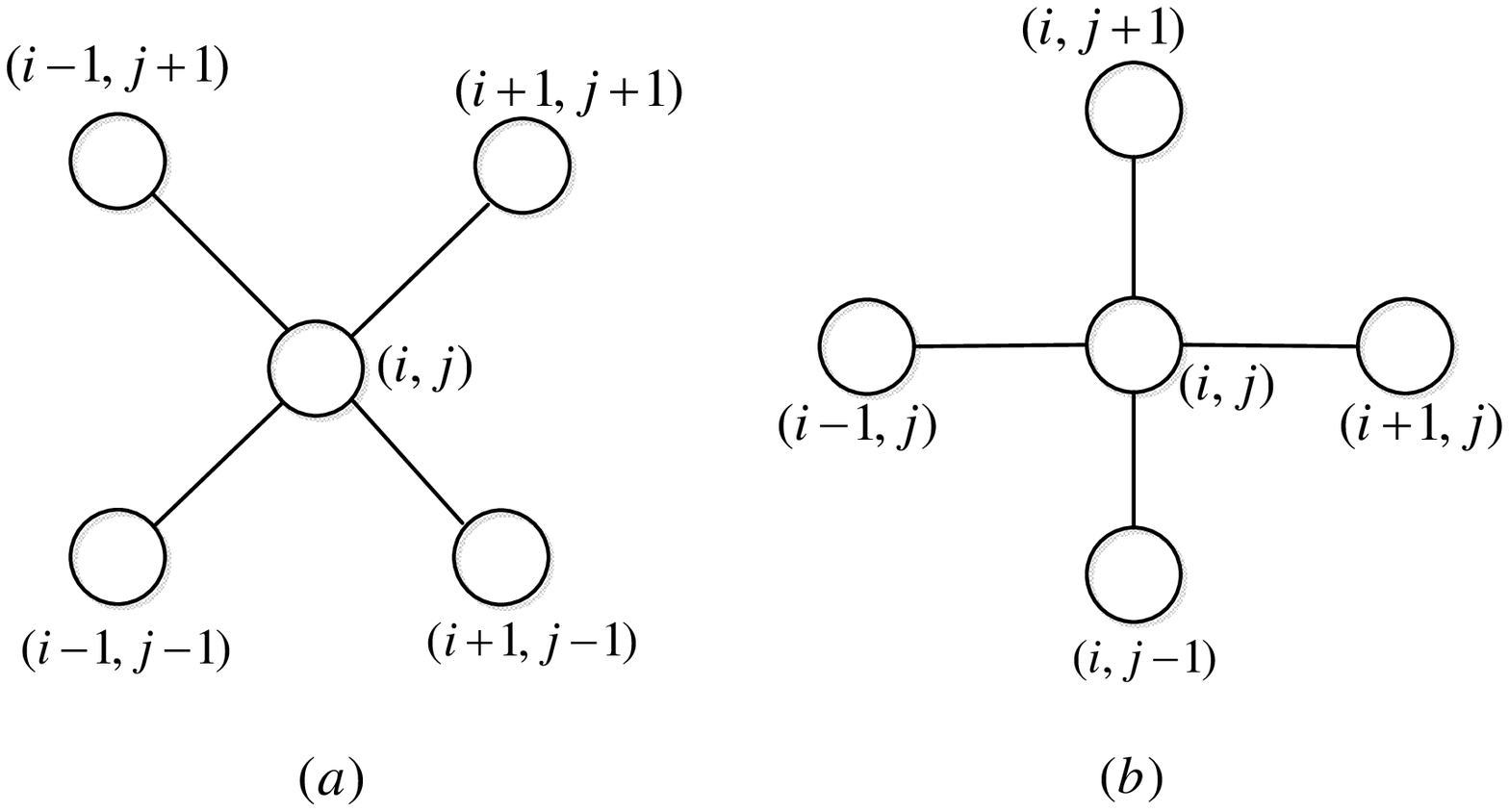}
   \caption{(a) ``$\times$" type five-point stencil. (b) ``+" type five-point stencil.}
  \label{MACH-5pt}
\end{figure}

In the rest of this paper, we will present the rigorous error analysis and corresponding numerical experiments  for the five-point MACH-like scheme \eqref{2D-MACH-5pt-schemes}.

\section{\label{sec:3}Local truncation error estimation}\setcounter{equation}{0}

Let
\begin{eqnarray}\label{def-Rij}
R_{i,j} = L_h u(x_i,y_j) - L_h u_{i,j} = L_h u(x_i,y_j) - h^2 f_{i,j},~~(i,j)\in S_0
\end{eqnarray}
be the local truncation error of \eqref{2D-MACH-5pt-schemes} at $X_{i,j}$,
and denote $x_{i + \frac{1}{2}} = x_{i} + \frac{h}{2}(i=0,1,\cdots,N-1)$, $\delta_{l,m}$ as the Kronecker delta.

Define a function space via
$$W = \{w|~w|_{\bar{D}_i} \in C^6(\bar{D}_i),~i=1,2 \}.$$

In this section, we will present the local truncation error of \eqref{2D-MACH-5pt-schemes}. 

\begin{theorem}\label{theorem-Rij}
If $u \in W$, then the following holds.
\begin{description}
\item[1)] For any interior node which is non adjacent to the interface,  we have
\begin{eqnarray}\label{Rij-equation-A}
R_{i,j} = (\kappa^-)^{\delta_{1,k}} C^{(4)}(x_i,y_j)  h^4 +  O(h^6), ~~(i,j) \in S_k^I, ~k=1,2,
\end{eqnarray}
where
\begin{eqnarray}\label{def-Cij4}
&& C^{(4)}(x,y) = -\frac{1}{12} (\frac{\partial^4 u}{\partial y^4} + \frac{\partial^4 u}{\partial x^4}
+ 6 \frac{\partial^4 u}{\partial y^2 \partial x^2})(x,y).
\end{eqnarray}

\item[2)] For any interior node which is adjacent to the interface, we have
\begin{eqnarray}\label{Rij-equation-B}
R_{i,j} = C_{i}^{(1)}(y_j) h + C_{i}^{(2)}(y_j) h^2
+ C_{i}^{(3)}(y_j) h^3 + O(h^4), ~(i,j) \in S_B,
\end{eqnarray}
where
\begin{eqnarray}\label{def-CM1}
&& C_M^{(1)}(y) = (\kappa^-  - \frac{\kappa^*}{2} - \frac{\kappa^* \kappa^-}{2}) (\frac{\partial u}{\partial x})^-(y),
  ~~C_{M+1}^{(1)}(y) = -C_M^{(1)}(y),\\\label{def-CM2}
&& C_{M}^{(2)}(y) =  \frac{\kappa^*}{8} [\frac{\partial^2 u}{\partial x^2}](y)
+ \frac{1}{2}(\kappa^- (\frac{\partial^2 u}{\partial y^2})^- - \kappa^* (\frac{\partial^2 u}{\partial y^2})^+)(y),\\\label{def-CM+12}
&& C_{M+1}^{(2)}(y) =  - \frac{\kappa^*}{8} [\frac{\partial^2 u}{\partial x^2}](y)
+ \frac{1}{2}((\frac{\partial^2 u}{\partial y^2})^+ - \kappa^* (\frac{\partial^2 u}{\partial y^2})^-)(y),\\\label{def-CM3}
&& C_{M}^{(3)}(y) = (\frac{\kappa^-}{24}  (\frac{\partial^3 u}{\partial x^3})^-
- \frac{\kappa^*}{24} \{\frac{\partial^3 u}{\partial x^3}\}
+ \frac{1}{4} (\kappa^- (\frac{\partial^3 u}{\partial y^2 \partial x})^-
                          - \kappa^*(\frac{\partial^3 u}{\partial y^2 \partial x})^+))(y), \\\label{def-CM+13}
&& C_{M+1}^{(3)}(y) =
(- \frac{1}{24}(\frac{\partial^3 u}{\partial x^3})^+
+ \frac{\kappa^*}{24}\{\frac{\partial^3 u}{\partial x^3}\}
+ \frac{1}{4}(-(\frac{\partial^3 u}{\partial y^2 \partial x})^+
+ \kappa^* (\frac{\partial^3 u}{\partial y^2 \partial x})^-))(y),~~~~
\end{eqnarray}
and $(\zeta)^{\pm}(y) := \lim \limits_{x\rightarrow {x_{M+\frac{1}{2}}}^{\pm}} \zeta(x, y)$, $[\zeta] := (\zeta)^- -(\zeta)^+$,
$\{\zeta\} := \frac{1}{2}((\zeta)^- + (\zeta)^+)$.

\end{description}
\end{theorem}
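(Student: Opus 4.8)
The plan is to substitute the exact solution into the discrete operator $L_h$ defined in \eqref{elliptic-operator} and expand by Taylor's theorem, exploiting that the interface $\Gamma=\{x=\tfrac12\}$ passes \emph{exactly} through the midpoint $x_{M+\frac12}=\tfrac12$ of the edge joining the nodes $x_M$ and $x_{M+1}$, so that every stencil point sits at a clean half-integer or integer multiple of $h$ from $\Gamma$. Throughout I would use that the model \eqref{Benchmark-Eq1} reduces, on each side of $\Gamma$, to the constant-coefficient equation $-\kappa\Delta u+u=f$ (with $\kappa=\kappa^-$ in $D_1$ and $\kappa=1$ in $D_2$ by \eqref{def-kappa}), so that $f_{i,j}=f(x_i,y_j)$ may be rewritten as a combination of second derivatives of $u$. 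I would then evaluate $R_{i,j}$ from \eqref{def-Rij} order by order in $h$.

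For part 1) the node and its four diagonal neighbours lie in a single subdomain, so $\kappa$ is constant and the coefficients reduce to $-\tfrac12\kappa$ on the four corners and $2\kappa+h^2$ at the centre. Expanding $u(x_i\pm h,y_j\pm h)$ about $(x_i,y_j)$ and summing over the four corners, the $\pm$ symmetry cancels all odd-order terms, the zeroth-order term cancels against $2\kappa u_{i,j}$, and the second-order sum reproduces $-\kappa h^2\Delta u$; together with the $h^2u_{i,j}$ contribution this is exactly $h^2(-\kappa\Delta u+u)=h^2 f_{i,j}$. Subtracting $h^2f_{i,j}$ leaves only the fourth-order remainder $-\tfrac{\kappa h^4}{12}(\partial_x^4u+6\partial_x^2\partial_y^2u+\partial_y^4u)$, which is $(\kappa^-)^{\delta_{1,k}}C^{(4)}h^4$ with $C^{(4)}$ as in \eqref{def-Cij4}, and the next surviving term is $O(h^6)$ by the same parity argument, giving \eqref{Rij-equation-A}.

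For part 2) the stencil straddles $\Gamma$, so I would split the five values by subdomain, expand the $D_1$ values through the $u^-$ branch and the $D_2$ values through the $u^+$ branch, both about $(x_{M+\frac12},y_j)$, and expand the source $h^2f_{M,j}$ about the same point using $f^-=-\kappa^-\Delta u^-+u^-$. Collecting powers of $h$, the $O(1)$ coefficient is proportional to $(u)^--(u)^+=[u]=0$ by \eqref{Benchmark-Eq3} and hence vanishes; the $O(h)$ coefficient collapses, after inserting the flux relation $(\partial_xu)^+=\kappa^-(\partial_xu)^-$ (the flux jump of \eqref{Benchmark-Eq3} with $\kappa=1$ in $D_2$), to $(\kappa^--\tfrac{\kappa^*}{2}-\tfrac{\kappa^*\kappa^-}{2})(\partial_xu)^-$, which is exactly \eqref{def-CM1}. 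The $O(h^2)$ and $O(h^3)$ coefficients are assembled the same way, now also using the $h^2u_{i,j}$ term and the expansion of $h^2f_{M,j}$, together with the tangential differentiations of the jump conditions, namely $(\partial_y^k u)^-=(\partial_y^k u)^+$ and $\kappa^-(\partial_y^k\partial_x u)^-=(\partial_y^k\partial_x u)^+$. The $y$-symmetry of the stencil annihilates every term odd in the $y$-offset (in particular $\partial_x^2\partial_y u$ and $\partial_y^3u$ at third order), which is what leaves precisely the derivatives appearing in \eqref{def-CM2} and \eqref{def-CM3}. The node $x_{M+1}\in S_2^B$ is treated identically with the roles of $u^-,u^+$ and of $\kappa^-,1$ interchanged, producing \eqref{def-CM+12}--\eqref{def-CM+13} and $C_{M+1}^{(1)}=-C_M^{(1)}$.

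The routine but delicate part is the order-$h^2$ and order-$h^3$ bookkeeping: one must expand $L_hu$, the reaction term $h^2u_{i,j}$, and the source $h^2f_{M,j}$ all to the same order and keep track of which jump relations are genuinely available. The essential point, and the main place a sign or coefficient error could creep in, is that flux continuity is a pointwise identity in $y$ along $\Gamma$, so it may be differentiated only tangentially; there is \emph{no} admissible relation linking $(\partial_x^2u)^{\pm}$ or $(\partial_x^3u)^{\pm}$ across $\Gamma$, which is exactly why these one-sided normal derivatives survive individually, as $[\partial_x^2u]$ and $\{\partial_x^3u\}$, in \eqref{def-CM2} and \eqref{def-CM3}. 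Consequently the leading truncation error at the interface nodes is genuinely $O(h)$ unless the coefficient of $(\partial_xu)^-$ in \eqref{def-CM1} vanishes, which, as a consistency check on the harmonic-average hypothesis, happens precisely when $\kappa^*=2\kappa^-/(\kappa^-+1)$.
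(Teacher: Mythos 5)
Your proposal is correct and follows essentially the same route as the paper: the paper's proof likewise substitutes $u$ into $L_h$, first Taylor-expands in $y$ at $(x_{M\pm 1},y_j)$ (your parity argument) and then expands one-sidedly in $x$ about the interface point $(x_{M+\frac{1}{2}},y_j)$, invoking $[u]=0$, the flux condition $[\kappa\frac{\partial u}{\partial \vec{n}}]=0$ and equation \eqref{Benchmark-Eq1} to assemble $C_M^{(1)}$, $C_M^{(2)}$, $C_M^{(3)}$, with $i=M+1$ (and part 1)) treated analogously. Your bookkeeping reproduces the paper's coefficients exactly, including the surviving one-sided terms $[\frac{\partial^2 u}{\partial x^2}]$ and $\{\frac{\partial^3 u}{\partial x^3}\}$ and the vanishing of $C_M^{(1)}$ precisely at the harmonic average $\kappa^*=\frac{2\kappa^-}{\kappa^-+1}$.
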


\begin{proof}
We only prove for $i= M$ and $j=1,2,\cdots,N-1$, the remainder of the argument is analogous to it.

By \eqref{elliptic-operator} and \eqref{def-Rij}, we have
\begin{eqnarray}\label{appendix-Rij-3}\nonumber
R_{M,j}
&=& -\frac{1}{2} \kappa^- [u(x_{M-1},y_{j-1})+ u(x_{M-1},y_{j+1})]  -\frac{1}{2} \kappa^* [u(x_{M+1},y_{j-1}) + u(x_{M+1},y_{j+1})]\\
&& + (\kappa^- + \kappa^* + h^2) u(x_{M},y_{j})  - h^2 f(x_M, y_j).
\end{eqnarray}

Substituting the Taylor series expansions of $u(x_{M\pm 1}, y_{j-1})$ and $u(x_{M\pm 1}, y_{j+1})$ at $(x_{M\pm 1}, y_{j})$ into \eqref{appendix-Rij-3}, we have
\begin{eqnarray}\label{appendix-Rij-3-1}\nonumber
R_{M,j}
&=&  \kappa^- [u(x_M,y_j) - u(x_{M-1},y_{j})] + \kappa^*[u(x_M,y_j) - u(x_{M+1},y_j)] \\\nonumber
&&  -\frac{1}{2}h^2[\kappa^- \frac{\partial^2 u}{\partial y^2}(x_{M-1},y_{j}) + \kappa^* \frac{\partial^2 u}{\partial y^2}(x_{M+1},y_{j})]\\
&& + h^2 u(x_{M},y_{j})  - h^2 f(x_M, y_j) + O(h^4).
\end{eqnarray}

Further, substituting the Taylor series expansions of $u(x_M, y_j)$, $u(x_{M\pm 1}, y_j)$, $f(x_M,y_j)$ and $\frac{\partial^2 u}{\partial y^2}(x_{M\pm 1},y_{j})$ at $(x_{M+\frac{1}{2}},y_j)$
into \eqref{appendix-Rij-3-1}, and using $[u]|_{\Gamma} = 0$,  equation \eqref{Benchmark-Eq1} and $[\kappa \frac{\partial u}{\partial \vec{n}}]|_{\Gamma} = 0$, $R_{M,j}$ is rewritten as
\begin{eqnarray}\nonumber
R_{M,j}
&=& h(\kappa^- - \frac{\kappa^*}{2} - \frac{\kappa^* \kappa^-}{2})(\frac{\partial u}{\partial x})^-(y_j)\\\nonumber
&& + h^2 (\frac{\kappa^*}{8}[\frac{\partial^2 u}{\partial x^2}] + \frac{1}{2}(\kappa^- (\frac{\partial^2 u}{\partial y^2})^- - \kappa^* (\frac{\partial^2 u}{\partial y^2})^+))(y_j) \\\nonumber
&& + h^3 (\frac{\kappa^-}{24}  (\frac{\partial^3 u}{\partial x^3})^-  - \frac{\kappa^*}{24}\{\frac{\partial^3 u}{\partial x^3}\}
+ \frac{1}{4} (\kappa^- (\frac{\partial^3 u}{\partial y^2 \partial x})^- - \kappa^*(\frac{\partial^3 u}{\partial y^2 \partial x})^+) )(y_j) + O(h^4)
\\\label{RMj}
&=:& C_{M}^{(1)}(y_j) h + C_{M}^{(2)}(y_j) h^2 + C_{M}^{(3)}(y_j) h^3 + O(h^4),
\end{eqnarray}
where $C_{M}^{(1)}(y)$, $C_{M}^{(2)}(y)$ and $C_{M}^{(3)}(y)$ are defined by \eqref{def-CM1}, \eqref{def-CM2} and \eqref{def-CM3}, respectively.

This finishes the proof.

\end{proof}

For a lot of diffusion problems with interface (such as \cite{2013KMX}), the changing of the solution along the normal direction (here for the $x$ direction) of the interfaces is far more quickly than the tangential direction (here for the $y$ direction). As a result, we suppose that
\begin{eqnarray}\label{assumption-equ}
\lim\limits_{x \to {x_{M+\frac{1}{2}}}^-} \frac{\partial^2 u}{\partial y^2} = \lim\limits_{x \to {x_{M+\frac{1}{2}}}^+} \frac{\partial^2 u}{\partial y^2} = 0.
\end{eqnarray}

Under the above assumption, we can obtain a corollary from \eqref{Rij-equation-B} as follows.
\begin{corollary}\label{corollary-1}
Let $u\in W$ and suppose that $\kappa^* = \frac{2\kappa^-}{\kappa^- + 1}$ and \eqref{assumption-equ} are hold, then
\begin{eqnarray}\label{corollary-1-eq1}
R_{i,j} = \tilde{C}_{i}^{(2)}(y_j) h^2 + C_{i}^{(3)}(y_j) h^3 + O(h^4),~~(i,j) \in S_B,
\end{eqnarray}
where
\begin{eqnarray}\label{corollary-1-eq2}
&& \tilde{C}_{M}^{(2)}(y) = \frac{\kappa^*}{8} [\frac{\partial^2 u}{\partial x^2}](y)~~and~~\tilde{C}_{M+1}^{(2)}(y) = - \tilde{C}_{M}^{(2)}(y).
\end{eqnarray}
\end{corollary}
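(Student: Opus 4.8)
The plan is to derive Corollary \ref{corollary-1} as a direct specialization of part 2) of Theorem \ref{theorem-Rij}; that is, I would start from the already-established expansion \eqref{Rij-equation-B} and feed the two hypotheses into the coefficients $C_i^{(1)}$ and $C_i^{(2)}$ defined in \eqref{def-CM1}--\eqref{def-CM+12}. Since $u\in W$ is inherited from the theorem, no new regularity work is needed. It suffices to treat the column $i=M$, because the case $i=M+1$ follows from the sign relations $C_{M+1}^{(1)}=-C_M^{(1)}$ and $\tilde{C}_{M+1}^{(2)}=-\tilde{C}_M^{(2)}$ already recorded in the statements.

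First I would eliminate the leading $O(h)$ term. The key point is that the harmonic-average choice $\kappa^*=\frac{2\kappa^-}{\kappa^-+1}$ is tailored precisely to annihilate $C_M^{(1)}$. Writing the prefactor in \eqref{def-CM1} as $\kappa^--\frac{\kappa^*}{2}(1+\kappa^-)$ and substituting the hypothesis gives $\frac{\kappa^*}{2}(1+\kappa^-)=\frac{\kappa^-}{\kappa^-+1}(1+\kappa^-)=\kappa^-$, so the prefactor vanishes identically and hence $C_M^{(1)}(y)\equiv 0$. By \eqref{def-CM1} the same holds at $i=M+1$, so the first-order term drops out at both interface-adjacent columns.

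Next I would simplify the $h^2$ coefficient using \eqref{assumption-equ}. Substituting the one-sided vanishing $(\frac{\partial^2 u}{\partial y^2})^\pm=0$ into \eqref{def-CM2} removes the two tangential-curvature terms and leaves $C_M^{(2)}(y)=\frac{\kappa^*}{8}[\frac{\partial^2 u}{\partial x^2}](y)=\tilde{C}_M^{(2)}(y)$, matching \eqref{corollary-1-eq2}; the analogous substitution in \eqref{def-CM+12} yields $C_{M+1}^{(2)}=-\tilde{C}_M^{(2)}$. The third-order coefficient $C_i^{(3)}$ in \eqref{def-CM3}--\eqref{def-CM+13} is untouched by either hypothesis, so it carries over verbatim. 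Reassembling these facts into \eqref{Rij-equation-B} then produces \eqref{corollary-1-eq1}.

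I do not anticipate a genuine obstacle, as the corollary is essentially an exercise in substituting the two hypotheses into a proven expansion. The only step demanding care is the one-line algebraic identity showing that the harmonic mean exactly cancels the first-order prefactor; this must be verified rather than assumed, since, for instance, the arithmetic-average choice of $\kappa^*$ would not produce the cancellation and would leave a nonvanishing $O(h)$ truncation error.
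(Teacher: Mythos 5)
Your proposal is correct and follows exactly the route the paper intends: the paper states the corollary as an immediate consequence of Theorem \ref{theorem-Rij}, with the harmonic average $\kappa^*=\frac{2\kappa^-}{\kappa^-+1}$ annihilating the prefactor $\kappa^--\frac{\kappa^*}{2}(1+\kappa^-)$ in \eqref{def-CM1} and the assumption \eqref{assumption-equ} reducing \eqref{def-CM2}--\eqref{def-CM+12} to $\pm\tilde{C}_M^{(2)}$, just as you argue. Your explicit verification of the cancellation identity, and your remark that an arithmetic average would leave a nonvanishing $O(h)$ term, are consistent with the paper's own numerical observations in Tables \ref{table-7-0} and \ref{table-1}.
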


For simplicity, we denote the assumptions
$\kappa^* = \frac{2\kappa^-}{\kappa^- + 1}$ and \eqref{assumption-equ} as Assumption~{\bf I}.

\section{\label{sec:4}Global error estimation}\setcounter{equation}{0}

In this section, we will investigate the global error estimates for the five-point MACH-like scheme.

Defined
\begin{eqnarray}\label{def-vec-e}
\vec{e} = (e_{0,0},\cdots,e_{0,N},\cdots,e_{N,0},\cdots,e_{N,N})
\end{eqnarray}
as the error vector, where
\begin{eqnarray}\label{def-eij}
e_{i,j} = u(x_i,y_j) - u_{i,j}.
\end{eqnarray}


Using \eqref{def-vec-e}, \eqref{def-eij} and \eqref{2D-MACH-5pt-schemes}, we can obtain that $\vec{e}$ satisfies the following 2D difference equations 
\begin{equation}\label{eij-equation}
\left\{\begin{array}{ll}
L_h e_{i,j} = R_{i,j}, & (i,j)\in S_0,\\
e_{i,0} = e_{i,N} = e_{0,j} = e_{N,j} = 0, & (i,j) \in S,\\
\end{array}\right.
\end{equation}
where $R_{i,j}$ for $(i,j)\in S_0$ are given by \eqref{Rij-equation-A} and \eqref{Rij-equation-B}.

Let $S_B$ be the indicated set of the interior nodes adjacent to the interface(given by \eqref{def-SI-SB}), denote $e_{i,j} = 0(\forall (i,j)\in S_B)$ in \eqref{eij-equation}, and we have
{\small
\begin{equation}\label{equation-eij1}
\left\{
\begin{array}{ll}
-\frac{1}{2}\kappa^-(e_{i-1,j-1}^{(1)} + e_{i-1,j+1}^{(1)}) + (2\kappa^- + h^2)e_{i,j}^{(1)} - \frac{1}{2}\kappa^-(e_{i+1,j-1}^{(1)} + e_{i+1,j+1}^{(1)}) = R_{i,j}, &(i,j) \in S_1^I,\\
-\frac{1}{2}(e_{i-1,j-1}^{(1)} + e_{i-1,j+1}^{(1)}) + (2 + h^2)e_{i,j}^{(1)} - \frac{1}{2}(e_{i+1,j-1}^{(1)} + e_{i+1,j+1}^{(1)}) = R_{i,j}, & (i,j) \in S_2^I,\\
e_{i,j}^{(1)} = 0, &(i,j) \in S_B,\\
e_{0,j}^{(1)} =  e_{N,j}^{(1)} = e_{i,0}^{(1)} = e_{i,N}^{(1)} = 0, & (i,j)\in S.
\end{array}
\right.
\end{equation}
}

Let $\vec{e}^{(1)} = (e_{0,0}^{(1)},\cdots,e_{0,N}^{(1)},\cdots,e_{N,0}^{(1)},\cdots,e_{N,N}^{(1)})$ be the solution vector of \eqref{equation-eij1}
and the error vector $\vec{e}$ can be decomposed as
\begin{eqnarray}\label{decompose-eij}
\vec{e} = \vec{e}^{(1)} + \vec{e}^{(2)}.
\end{eqnarray}
By \eqref{eij-equation}, \eqref{equation-eij1} and \eqref{decompose-eij}, it is easy to check that $\vec{e}^{(2)} := (e_{0,0}^{(2)},\cdots,e_{0,N}^{(2)},\cdots,e_{N,0}^{(2)},\cdots,e_{N,N}^{(2)})$ satisfies the following difference equations
{\small
\begin{equation}\label{equation-eij2-000}
\left\{
\begin{array}{ll}
-\frac{1}{2}\kappa^-(e_{i-1,j-1}^{(2)} + e_{i-1,j+1}^{(2)}) + (2\kappa^- + h^2)e_{i,j}^{(2)} - \frac{1}{2}\kappa^-(e_{i+1,j-1}^{(2)} + e_{i+1,j+1}^{(2)}) = 0, & (i,j) \in S_1^I,\\
-\frac{1}{2}\kappa^-(e_{i-1,j-1}^{(2)} + e_{i-1,j+1}^{(2)}) + (\kappa^- + \kappa^* + h^2)e_{i,j}^{(2)} - \frac{1}{2}\kappa^*(e_{i+1,j-1}^{(2)} + e_{i+1,j+1}^{(2)}) = \gamma_{M,j}, & (i,j) \in S_1^B,\\
-\frac{1}{2}\kappa^*(e_{i-1,j-1}^{(2)} + e_{i-1,j+1}^{(2)}) + (\kappa^* + 1 + h^2)e_{i,j}^{(2)} - \frac{1}{2}(e_{i+1,j-1}^{(2)} + e_{i+1,j+1}^{(2)}) = \gamma_{M+1,j}, & (i,j) \in S_2^B,\\
-\frac{1}{2}(e_{i-1,j-1}^{(2)} + e_{i-1,j+1}^{(2)}) + (2 + h^2)e_{i,j}^{(2)} - \frac{1}{2}(e_{i+1,j-1}^{(2)} + e_{i+1,j+1}^{(2)}) = 0, & (i,j) \in S_2^I,\\
e_{0,j}^{(2)} = e_{N,j}^{(2)} = e_{i,0}^{(2)} = e_{i,N}^{(2)} = 0, & (i,j)\in S,
\end{array}
\right.
\end{equation}
}
where
\begin{eqnarray*}
\gamma_{M,j}   = R_{M,j} + \frac{1}{2}\kappa^-(e_{M-1,j-1}^{(1)} + e_{M-1,j+1}^{(1)}),~~
\gamma_{M+1,j} = R_{M+1,j} + \frac{1}{2}(e_{M+2,j-1}^{(1)} + e_{M+2,j+1}^{(1)}).
\end{eqnarray*}

Then, we will employ the dimension reduction techniques (convert the 2D problems into the 1D problems) to estimate $\vec{e}^{(l)}(l=1,2)$.
Therefore, for any given $i=1,2,\cdots,N-1$, $l=1,2$, introduce the  discrete sine transform (see, e.g. \cite{2005SN}) for sequence $\{e_{i,j}^{(l)}\}_{j=1}^{N-1}$ and $\{R_{i,j}\}_{j=1}^{N-1}$ as
\begin{eqnarray}\label{expand-baruij-y-e}
\bar{e}_{i,k}^{(l)} = \sqrt{2h} \sum\limits_{j=1}^{N-1} e_{i,j}^{(l)} \sin (j \pi y_k),~\bar{R}_{i,k} = \sqrt{2h} \sum\limits_{j=1}^{N-1} R_{i,j} \sin (j \pi y_k),~~k=1,2,\cdots,N-1.
\end{eqnarray}
Using
$$\sin (k\pi y_j) = \sin (j\pi y_k),~~(k,j)\in S_0,$$
and
$$\sum\limits_{k=1}^{N-1} \sin (k\pi y_l) \sin (k\pi y_m) = \frac{1}{2h} \delta_{l,m},$$
one can easily confirm
\begin{eqnarray}\label{expand-uij-y-e}
e_{i,j}^{(l)} = \sqrt{2h} \sum\limits_{k=1}^{N-1} \bar{e}_{i,k}^{(l)} \sin (k \pi y_j),~R_{i,j} = \sqrt{2h} \sum\limits_{k=1}^{N-1} \bar{R}_{i,k} \sin (k \pi y_j),~~j = 1,2,\cdots,N-1,
\end{eqnarray}
and where the sequences $\{e_{i,j}^{(l)}\}_{j=1}^{N-1}$ and $\{R_{i,j}\}_{j=1}^{N-1}$ are called as the inverse discrete sine transform of $\{\bar{e}_{i,k}^{(l)}\}_{k=1}^{N-1}$ and $\{\bar{R}_{i,k}\}_{k=1}^{N-1}$, respectively.



Let
\begin{eqnarray}\label{def-vector-barel}
\vec{\bar{e}}_k^{(l)} = (\bar{e}_{0,k}^{(l)}, \cdots, \bar{e}_{N,k}^{(l)}),~~k=1,2,\cdots,N-1, l=1,2,
\end{eqnarray}
where $\bar{e}_{i,k}^{(l)}(i=1,2,\cdots,N-1)$ is defined by \eqref{expand-baruij-y-e}, and
\begin{eqnarray}\label{bare-0Nk}
\bar{e}_{0,k}^{(l)} = \bar{e}_{N,k}^{(l)} = 0.
\end{eqnarray}
Using \eqref{expand-baruij-y-e} and \eqref{expand-uij-y-e}, we can convert the estimate problems of $\vec{e}^{(l)}(l=1,2)$ into $\vec{\bar{e}}_k^{(l)}(k=1,2,\cdots,N-1)$.

Denote the maximum norm of any vector $\vec{\zeta}=(\zeta_1,\cdots,\zeta_{n})$ as $\|\vec{\zeta}\|_{\infty}$,
and it is calculated as
$\max\limits_{i} |\zeta_i|$.
We will present the estimation of $\|\vec{\bar{e}}_k^{(1)}\|_{\infty}(k=1,2,\cdots,N-1)$ firstly.
It is easy to verify that $\vec{\bar{e}}_k^{(1)}(k=1,2,\cdots,N-1)$ satisfies the following 1D difference equations
\begin{eqnarray}\label{bareij-equation-result-1}
\left\{\begin{array}{l}
- \kappa^- \bar{e}^{(1)}_{i-1,k} \cos (k\pi h) + (2\kappa^- + h^2) \bar{e}^{(1)}_{i,k} - \kappa^- \bar{e}^{(1)}_{i+1,k}\cos (k\pi h) = \bar{R}_{i,k},~i=1,2,\cdots,M-1,\\
- \bar{e}^{(1)}_{i-1,k} \cos (k\pi h) + (2 + h^2) \bar{e}^{(1)}_{i,k} - \bar{e}^{(1)}_{i+1,k}\cos (k\pi h) = \bar{R}_{i,k},~i=M+2,M+3,\cdots,N-1,\\
\bar{e}^{(1)}_{M,k} =  \bar{e}^{(1)}_{M+1,k} = 0,~~\bar{e}^{(1)}_{0,k} = \bar{e}^{(1)}_{N,k} = 0.
\end{array}\right.~~
\end{eqnarray}

As a matter of fact, using \eqref{expand-baruij-y-e} and noting that $e_{i,j}^{(1)} = 0(\forall (i,j) \in S_B)$, we can get
$\bar{e}^{(1)}_{M,k} =  \bar{e}^{(1)}_{M+1,k} = 0$. Hence, from \eqref{bare-0Nk}, it follows that we only need to demonstrate the former $N-3$ equations of \eqref{bareij-equation-result-1}
are established.

Substituting \eqref{expand-uij-y-e} into the part of \eqref{equation-eij1} where $(i,j) \in S_1^I$, we get
{\small
\begin{eqnarray}\label{bareij-equation-1}\nonumber
&&-\frac{1}{2}\kappa^-\sqrt{2h} \sum\limits_{k=1}^{N-1} \bar{e}_{i-1,k}^{(1)} [\sin (k \pi y_{j-1}) + \sin (k \pi y_{j+1})] + (2\kappa^- + h^2)\sqrt{2h} \sum\limits_{k=1}^{N-1} \bar{e}_{i,k}^{(1)}\sin (k \pi y_{j})\\
&& - \frac{1}{2}\kappa^-\sqrt{2h} \sum\limits_{k=1}^{N-1} \bar{e}_{i+1,k}^{(1)} [\sin (k \pi y_{j-1}) + \sin (k \pi y_{j+1})] = \sqrt{2h} \sum\limits_{k=1}^{N-1} \bar{R}_{i,k}\sin (k \pi y_{j}),~~j=1,2,\cdots,N-1.~~~~~~~~
\end{eqnarray}
}

Owing to the characterization property of the discrete sine function $\sin (k\pi y_j)(k,j=1,2,\cdots,N)$ that
\begin{eqnarray*}
\sin (k \pi y_{j-1}) + \sin (k \pi y_{j+1})
= 2\sin (k\pi y_j) \cos (k\pi h),
\end{eqnarray*}
equation \eqref{bareij-equation-1} can be further represented as
\begin{eqnarray*}
&&\sqrt{2h} \sum\limits_{k=1}^{N-1}[- \kappa^- \bar{e}_{i-1,k}^{(1)} \cos (k\pi h) + (2\kappa^- + h^2) \bar{e}_{i,k}^{(1)} - \kappa^- \bar{e}_{i+1,k}^{(1)}\cos (k\pi h) - \bar{R}_{i,k}] \sin (k\pi y_j) = 0,\\
&&~~j=1,2,\cdots,N-1.
\end{eqnarray*}
Therefore, we have
\begin{eqnarray*}
&& - \kappa^- \bar{e}_{i-1,k}^{(1)} \cos (k\pi h) + (2\kappa^- + h^2) \bar{e}_{i,k}^{(1)} - \kappa^- \bar{e}_{i+1,k}^{(1)}\cos (k\pi h) - \bar{R}_{i,k} = 0,~~k = 1,2,\cdots,N-1.~~~~~~~~
\end{eqnarray*}
Namely, the former $M-1$ equations of \eqref{bareij-equation-result-1} are established.

The rest of the proof is similar to before. The proof is completed.

In order to estimate $\|\vec{\bar{e}}_k^{(1)}\|_{\infty}(k=1,2,\cdots,N-1)$ by using the difference equations \eqref{bareij-equation-result-1},
we need to introduce several lemmas as follows.
\begin{lemma}\label{lemma-4}
Let $\varphi_j := \varphi(y_j)(j=1,2,\cdots,N-1)$, and assume that $\varphi(y) \in C^2([0,1])$, then
\begin{eqnarray}\label{barRkj-000}
|\sum\limits_{j=1}^{N-1} \varphi_j \sin (j\pi y_k)| &\lesssim& \frac{\|\varphi\|_{2,\infty}}{\sin \frac{k\pi h}{2}}.
\end{eqnarray}
\end{lemma}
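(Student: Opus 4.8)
The plan is to estimate this trigonometric sum by Abel summation (summation by parts), transferring the oscillation of the sine factors onto the smooth sequence $\{\varphi_j\}$. Since $y_k = kh$, the kernel is $\sin(j\pi y_k) = \sin(jk\pi h)$, so first I would introduce the Dirichlet-type partial sums $D_m := \sum_{j=1}^{m}\sin(jk\pi h)$ with the convention $D_0 := 0$. The classical closed form
$$D_m = \frac{\sin\!\big(\tfrac{mk\pi h}{2}\big)\,\sin\!\big(\tfrac{(m+1)k\pi h}{2}\big)}{\sin\!\big(\tfrac{k\pi h}{2}\big)}$$
then yields the uniform bound $|D_m| \le 1/\sin\!\big(\tfrac{k\pi h}{2}\big)$ for every $m$, valid because $0 < \tfrac{k\pi h}{2} < \tfrac{\pi}{2}$ whenever $1\le k\le N-1$, so that the denominator stays positive. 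This single estimate is the engine of the whole argument.

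Next I would apply summation by parts to $\sum_{j=1}^{N-1}\varphi_j(D_j - D_{j-1})$, which produces
$$\sum_{j=1}^{N-1}\varphi_j\sin(jk\pi h) = \varphi_{N-1}D_{N-1} - \sum_{j=1}^{N-2}(\varphi_{j+1}-\varphi_j)\,D_j .$$
The boundary term obeys $|\varphi_{N-1}D_{N-1}| \le \|\varphi\|_{2,\infty}/\sin\!\big(\tfrac{k\pi h}{2}\big)$. For the remaining sum, the mean value theorem gives $|\varphi_{j+1}-\varphi_j|\le h\,\|\varphi\|_{2,\infty}$, and since the number of terms is less than $N$ with $Nh = 1$, summing the factors of $h$ yields an $O(1)$ constant; hence $\big|\sum_{j=1}^{N-2}(\varphi_{j+1}-\varphi_j)D_j\big| \lesssim \|\varphi\|_{2,\infty}/\sin\!\big(\tfrac{k\pi h}{2}\big)$. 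Adding the two contributions gives exactly \eqref{barRkj-000}. I note in passing that only the $C^1$ norm of $\varphi$ actually enters, so the stated $C^2$ hypothesis is stronger than needed and is presumably retained only because $\|\cdot\|_{2,\infty}$ is the norm used throughout the surrounding analysis.

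The only delicate point, and the step I would be most careful about, is the uniform partial-sum bound: the factor $1/\sin\!\big(\tfrac{k\pi h}{2}\big)$ degenerates as $k\to 0$, so it is essential to keep the estimate expressed in terms of this quantity rather than absorbing it into a hidden constant, and to verify that the admissible range $1\le k\le N-1$ confines $\tfrac{k\pi h}{2}$ strictly inside $(0,\tfrac{\pi}{2})$. Everything else is a routine Abel computation; the mild bookkeeping subtlety is the boundary term of the summation by parts, which I would control cleanly by setting $D_0 = 0$ so that no spurious $\varphi_0$ contribution appears.
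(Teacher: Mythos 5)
Your proof is correct, but it takes a genuinely different route from the paper's. The paper never forms the Dirichlet kernel: instead it uses the three-term identity $\sin((j-1)\pi y_k)+\sin((j+1)\pi y_k)=2\cos(\pi y_k)\sin(j\pi y_k)$ to show that $(\cos(\pi y_k)-1)\sum_{j=1}^{N-1}\varphi_j\sin(j\pi y_k)$ equals $\sum_{j=2}^{N-2}\bigl(\tfrac{\varphi_{j-1}+\varphi_{j+1}}{2}-\varphi_j\bigr)\sin(j\pi y_k)$ plus boundary terms, and then divides by $\cos(\pi y_k)-1=-2\sin^2\tfrac{k\pi h}{2}$; see \eqref{equation-fi}. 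That is in effect a \emph{double} summation by parts: the oscillation is traded for second differences of $\varphi$, which are bounded by $h^2\|\varphi''\|_\infty$ via Taylor expansion in \eqref{proof-Rik-1-000}, after which one of the two powers of $\sin\tfrac{k\pi h}{2}$ in the denominator is removed using $\sin\tfrac{k\pi h}{2}\ge kh$ from \eqref{inequiv-sin}. Your single Abel transform with the closed-form bound $|D_m|\le 1/\sin\tfrac{k\pi h}{2}$ is more elementary, avoids the alternating-sign boundary bookkeeping of \eqref{equation-fi}, and --- as you correctly observe --- consumes only first differences, so $\varphi\in C^1$ (indeed Lipschitz) suffices; by contrast the $C^2$ hypothesis is genuinely used in the paper's route, so your remark that it is stronger than needed for the lemma as stated is accurate. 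What the paper's heavier transform buys is an extra factor $1/k$ on the interior contribution (its first term in \eqref{equation-fi-000} is $\lesssim \|\varphi''\|_\infty/(k\sin\tfrac{k\pi h}{2})$), but this gain is discarded because the boundary terms are only $O(\|\varphi\|_\infty/\sin\tfrac{k\pi h}{2})$; both arguments therefore terminate at exactly \eqref{barRkj-000}, which is the only form in which the lemma is invoked later (in Theorem \ref{theorem-1} and Lemma \ref{lemma-gammamm+1}), and your verification that $1\le k\le N-1$ keeps $\tfrac{k\pi h}{2}$ strictly inside $(0,\tfrac{\pi}{2})$ is the same nondegeneracy fact the paper records as $\sin\tfrac{k\pi h}{2}\neq 0$.
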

\begin{proof}
Since
\begin{eqnarray*}
 \sum\limits_{j=2}^{N-2} (\varphi_{j-1} + \varphi_{j+1}) \sin (j \pi y_k)
&=& \sum\limits_{j=1}^{N-3} \varphi_j \sin ((j+1) \pi y_k) + \sum\limits_{j=3}^{N-1} \varphi_j \sin ((j-1) \pi y_k) \\
 &=& 2 \cos(\pi y_k) \sum\limits_{j=1}^{N-1} \varphi_j \sin (j\pi y_k) -  \sin (\pi y_k)((-1)^{k-1} \varphi_{N-2}  -  \varphi_{2}),
\end{eqnarray*}
we have
\begin{eqnarray*}
(\cos(\pi y_k) - 1)\sum\limits_{j=1}^{N-1} \varphi_j \sin (j\pi y_k) &=& \sum\limits_{j=2}^{N-2} (\frac{\varphi_{j-1} + \varphi_{j+1}}{2}  - \varphi_j)\sin (j \pi y_k) \\
&&  +  \sin (\pi y_k)((-1)^{k-1}(\frac{1}{2} \varphi_{N-2} - \varphi_{N-1})
 +   (\frac{1}{2}\varphi_{2} - \varphi_{1})).
\end{eqnarray*}
As this result, and noting that $\cos(\pi y_k) - 1 = -2\sin^2 \frac{k\pi h}{2} \neq 0(k=1,2,\cdots,N-1)$, one we can obtain
\begin{eqnarray}\label{equation-fi}\nonumber
\sum\limits_{j=1}^{N-1} \varphi_j \sin (j\pi y_k)
&=& - \frac{1}{2\sin^2 \frac{k\pi h}{2}} \sum\limits_{j=2}^{N-2}(\frac{\varphi_{j-1} + \varphi_{j+1}}{2} - \varphi_j)\sin (j\pi y_k)\\
&& + \frac{\cos \frac{k\pi h}{2}}{\sin \frac{k\pi h}{2}} (-(\frac{1}{2}\varphi_{2} - \varphi_{1}) + (-1)^{k} (\frac{1}{2}\varphi_{N-2}  - \varphi_{N-1})).
\end{eqnarray}

Using
\begin{eqnarray}\label{sin-inequiv-0}
\sin x \ge \frac{2}{\pi}x, ~\forall x\in (0,\frac{\pi}{2}),
\end{eqnarray}
we get
\begin{eqnarray}\label{inequiv-sin}
\sin \frac{k\pi h}{2} \ge k h > 0.
\end{eqnarray}

From \eqref{equation-fi} and \eqref{inequiv-sin}, it follows
\begin{eqnarray}\label{equation-fi-000}\nonumber
|\sum\limits_{j=1}^{N-1} \varphi_j \sin (j\pi y_k)|
&\lesssim& \frac{h^{-1}}{k \sin \frac{k\pi h}{2}} \sum\limits_{j=2}^{N-2}|\frac{\varphi_{j-1} + \varphi_{j+1}}{2} - \varphi_j|\\\nonumber
&& + \frac{1}{\sin \frac{k\pi h}{2}} (|\varphi_{2}| + |\varphi_{1}| + |\varphi_{N-2}| + |\varphi_{N-1}|)\\
&\lesssim& \frac{h^{-1}}{k \sin \frac{k\pi h}{2}} \sum\limits_{j=2}^{N-2}|\frac{\varphi_{j-1} + \varphi_{j+1}}{2} - \varphi_j|  + \frac{\|\varphi\|_{\infty}}{\sin \frac{k\pi h}{2}}.
\end{eqnarray}

Noting that $\varphi(y) \in C^2([0,1])$ and by the Taylor series expansions of $\varphi_{j\pm 1}$ at $y=y_j$, we have
\begin{eqnarray}\label{proof-Rik-1-000}
|\frac{\varphi_{j-1} + \varphi_{j+1}}{2} - \varphi_j| \lesssim h^2 \|\varphi''\|_{\infty},~~j=2,3,\cdots,N-2.
\end{eqnarray}

Using \eqref{equation-fi-000}, \eqref{proof-Rik-1-000} and paying attention to $N = h^{-1}$, \eqref{barRkj-000} holds.
\end{proof}

Set the integer $K > 3$, $\beta \in \mathbb{R}$, a $K \times K$ square matrix is defined by
\begin{eqnarray}\label{T-form}
T = \left(\begin{array}{cccc}
\beta & -1     &        &       \\
-1    & \beta  & -1     &       \\
      & \ddots & \ddots &\ddots \\
      &        &  -1   &\beta\\
\end{array}\right).
\end{eqnarray}
For any given real number $\lambda$, let
\begin{eqnarray}\label{Z-j}
 Z_j(\lambda) &=& \lambda^j - \lambda^{-j},~j=0,1,2,\cdots.
\end{eqnarray}
If $|\lambda|>1$, paying attention to $Z_0(\lambda) = 0$ and $|Z_j(\lambda)| = |\lambda|^j - |\lambda|^{-j}$ , we have
\begin{eqnarray}\label{Zj-property-000}
\frac{|Z_j(\lambda)|}{|Z_i(\lambda)|} < 1,~~1 \le j < i.
\end{eqnarray}

The following lemma presents the estimation of the elements of $T^{-1}$, where $T^{-1}$ is the inverse of $T$.

\begin{lemma}\label{lemma-3}
If $|\beta| > 2$, $T$ is defined by \eqref{T-form} and $T^{-1} := (\tilde{t}_{i,j})_{K\times K}$, then
\begin{eqnarray}\label{tij-property-2-2}
& |\tilde{t}_{i,i}| = \max\limits_{j} |\tilde{t}_{i,j}|, &i=1,2,\cdots,K,
\end{eqnarray}
and
\begin{eqnarray}\label{tij-property-t11-tN-1}
&& \tilde{t}_{K,K} < 1.
\end{eqnarray}
\end{lemma}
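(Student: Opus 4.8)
The plan is to derive an explicit closed form for the entries $\tilde{t}_{i,j}$ of $T^{-1}$ in terms of the sequence $Z_j(\lambda)$ from \eqref{Z-j}, and then read off both assertions from the monotonicity \eqref{Zj-property-000}. Since $|\beta|>2$, the quadratic $t^2-\beta t+1=0$ has two distinct real roots whose product is $1$; let $\lambda$ denote the root with $|\lambda|>1$, so that $\lambda+\lambda^{-1}=\beta$. A direct computation from \eqref{Z-j} then shows $Z_{j+1}(\lambda)=\beta Z_j(\lambda)-Z_{j-1}(\lambda)$ together with $Z_0(\lambda)=0$, which is exactly the three-term recurrence encoded by the rows of $T$ in \eqref{T-form}. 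This suggests the candidate formula
\[
\tilde{t}_{i,j}=\frac{Z_{\min(i,j)}(\lambda)\,Z_{K+1-\max(i,j)}(\lambda)}{Z_1(\lambda)\,Z_{K+1}(\lambda)},\qquad i,j=1,\dots,K,
\]
whose $j$-th column, viewed as a function of the row index $i$, is proportional to $Z_i(\lambda)$ for $i\le j$ and to $Z_{K+1-i}(\lambda)$ for $i\ge j$.

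The first and main step is to verify that this candidate is indeed $T^{-1}$, i.e. that $T$ applied to the $j$-th column returns the $j$-th unit vector. For every row $i$ different from $1$, $j$ and $K$, both neighbouring entries lie on the same branch of the formula, so the combination $-\tilde t_{i-1,j}+\beta\,\tilde t_{i,j}-\tilde t_{i+1,j}$ vanishes by the recurrence above; the boundary rows $i=1$ and $i=K$ (when distinct from the diagonal row) also vanish, because there they produce the factor $\beta Z_1(\lambda)-Z_2(\lambda)=Z_0(\lambda)=0$. The only nonzero contribution comes from the diagonal row $i=j$, where the two branches meet; collecting terms reduces the claim to the Casoratian identity
\[
Z_{j+1}(\lambda)\,Z_{K+1-j}(\lambda)-Z_j(\lambda)\,Z_{K-j}(\lambda)=Z_1(\lambda)\,Z_{K+1}(\lambda).
\]
Establishing this identity is the technical heart of the argument, and I expect it to be the only real obstacle; I would prove it simply by substituting $Z_m(\lambda)=\lambda^m-\lambda^{-m}$ and checking that the mixed terms cancel, leaving $\lambda^{K+2}+\lambda^{-(K+2)}-\lambda^{K}-\lambda^{-K}$ on both sides.

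With the closed form justified, assertion \eqref{tij-property-2-2} follows directly from \eqref{Zj-property-000}. Fixing $i$ and letting $j$ vary, the factor $Z_{\min(i,j)}(\lambda)$ increases in modulus while $j\le i$ and the factor $Z_{K+1-\max(i,j)}(\lambda)$ decreases in modulus while $j\ge i$; since the complementary factor is constant on each of these ranges, $|\tilde t_{i,j}|$ is nondecreasing for $j\le i$ and nonincreasing for $j\ge i$, so its maximum over $j$ is attained at $j=i$. Finally, for assertion \eqref{tij-property-t11-tN-1}, the closed form gives $\tilde t_{K,K}=Z_K(\lambda)/Z_{K+1}(\lambda)$, and applying \eqref{Zj-property-000} with $j=K<i=K+1$ yields $|Z_K(\lambda)|<|Z_{K+1}(\lambda)|$, whence $\tilde t_{K,K}\le|\tilde t_{K,K}|<1$. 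Thus once the explicit inverse is in place, both conclusions are immediate consequences of the monotonicity already recorded in \eqref{Zj-property-000}.
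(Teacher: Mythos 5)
Your proposal is correct and, at its core, coincides with the paper's own proof: you derive exactly the closed form \eqref{tij-eq-3} for the entries of $T^{-1}$ (your $Z_{\min(i,j)}(\lambda)Z_{K+1-\max(i,j)}(\lambda)/(Z_1(\lambda) Z_{K+1}(\lambda))$ is the paper's two-branch formula written uniformly, and your Casoratian identity checks out, both sides equaling $\lambda^{K+2}+\lambda^{-(K+2)}-\lambda^{K}-\lambda^{-K}$), and you then read \eqref{tij-property-2-2} off the monotonicity \eqref{Zj-property-000} just as the paper does. The only local differences are in your favor: you actually verify that the candidate is the inverse (the paper asserts \eqref{tij-eq-3} as an ``easily confirmed'' solution of the difference equations \eqref{lemma-3-eq-1-T}), and you get \eqref{tij-property-t11-tN-1} directly from $\tilde t_{K,K}=Z_K(\lambda)/Z_{K+1}(\lambda)$ together with $|Z_K(\lambda)|<|Z_{K+1}(\lambda)|$, whereas the paper takes a slightly longer route through the sign fact $\beta\tilde t_{i,i}>0$ and the last row of $TT^{-1}=I$.
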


\begin{proof}
Since $T T^{-1} = I$, for any given $i=1,2,\cdots,K$, it follows
\begin{eqnarray}\label{lemma-3-eq-1-T}
\left\{\begin{array}{ll}
\beta \tilde{t}_{i,1} - \tilde{t}_{i,2} = \delta_{i,1}, &\\
-\tilde{t}_{i,j-1} + \beta \tilde{t}_{i,j} - \tilde{t}_{i,j+1} = \delta_{i,j}, & j=2,3,\cdots,K-1,\\
-\tilde{t}_{i,K-1} + \beta \tilde{t}_{i,K} = \delta_{i,K}. &
\end{array}\right.
\end{eqnarray}

Noticing that the characteristic equation of \eqref{lemma-3-eq-1-T} is
\begin{eqnarray}\label{characteristic-equation-beta}
\lambda^2 -\beta \lambda + 1 = 0.
\end{eqnarray}
Using $|\beta|>2$, we get the characteristic root of equation \eqref{characteristic-equation-beta} with its absolute value greater than 1 as follows
\begin{eqnarray}\label{lemma-3-eq-5-solution-1}
 \lambda_{\beta} = sgn(\beta)\left(\frac{|\beta|}{2} + \sqrt{(\frac{\beta}{2})^2 - 1}\right),
\end{eqnarray}
where $sgn(\cdot)$ is the sign function.

By \eqref{lemma-3-eq-5-solution-1}, we can easily confirm that the component solution of difference equations \eqref{lemma-3-eq-1-T} is
\begin{eqnarray}\label{tij-eq-3}
\tilde{t}_{i,j} = \left\{\begin{array}{ll}
\frac{Z_{K-i+1}(\lambda_\beta) Z_j(\lambda_\beta)} {Z_{K+1}(\lambda_\beta) Z_1(\lambda_\beta)}, & \mbox{if}~j=1,2,\cdots,i,\\
\frac{Z_i(\lambda_\beta) Z_{K-j+1}(\lambda_\beta)} {Z_{K+1}(\lambda_\beta) Z_1(\lambda_\beta)}, & \mbox{if}~ j=i+1,i+2,\cdots,K.
\end{array}\right.
\end{eqnarray}

Hence, using \eqref{tij-eq-3}, we get
\begin{eqnarray}\label{23-tij-eq-3}
\frac{|\tilde{t}_{i,j}|}{|\tilde{t}_{i,i}|}
=
\left\{\begin{array}{ll}
\frac{|Z_j(\lambda_\beta)|} {|Z_i(\lambda_\beta)|}, & \mbox{if}~j=1,2,\cdots,i,\\
\frac{|Z_{K-j+1}(\lambda_\beta)|} {|Z_{K-i+1}(\lambda_\beta)|}, & \mbox{if}~ j=i+1,i+2,\cdots,K.
\end{array}\right.
\end{eqnarray}
Noticing that $|\lambda_{\beta}|>1$, from \eqref{Zj-property-000} and \eqref{23-tij-eq-3}, we have \eqref{tij-property-2-2}.

By using \eqref{Z-j}, \eqref{tij-eq-3} and \eqref{lemma-3-eq-5-solution-1}, we have
\begin{eqnarray}\label{tij-property-2-2-000}
\beta \tilde{t}_{i,i}
= \beta sgn(\lambda_{\beta}) \frac{|Z_{K-i+1}(\lambda_\beta)| |Z_i(\lambda_\beta)|} {|Z_{K+1}(\lambda_\beta)| |Z_1(\lambda_\beta)|} > 0.
\end{eqnarray}
From \eqref{lemma-3-eq-1-T}, \eqref{tij-property-2-2}, \eqref{tij-property-2-2-000} and $|\beta|>2$, then we have
\begin{eqnarray*}\nonumber
1 = -\tilde{t}_{K,K-1} + \beta \tilde{t}_{K,K} \ge -|\tilde{t}_{K,K}| + \beta \tilde{t}_{K,K}
  \ge (|\beta|-1) |\tilde{t}_{K,K}| >  |\tilde{t}_{K,K}|.
\end{eqnarray*}
That is to say  \eqref{tij-property-t11-tN-1} holds.

\end{proof}

For any given vector $\vec{\alpha} = (\alpha_1,\cdots,\alpha_{K})^T \in \mathbb{R}^{K}$, we consider
\begin{eqnarray}\label{lemma-3-eq-1}
T \vec{s} = \vec{\alpha},
\end{eqnarray}
where $T$ is defined by \eqref{T-form} and $\vec{s} = (s_1,\cdots,s_{K})^T$.
\begin{corollary}\label{corollary-2}
Under the assumptions as Lemma \ref{lemma-3}, we have the last component of the solution vector of \eqref{lemma-3-eq-1} satisfies
\begin{eqnarray}\label{lemma-3-eq-4}
|s_{K}| < K \|\vec{\alpha}\|_{\infty}.
\end{eqnarray}
\end{corollary}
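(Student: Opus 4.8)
\emph{Proof proposal.} The plan is to solve \eqref{lemma-3-eq-1} explicitly through the inverse matrix and then read off only the last coordinate. Since $|\beta|>2$, the characteristic analysis carried out in the proof of Lemma \ref{lemma-3} already shows that $T$ is nonsingular, so $\vec{s} = T^{-1}\vec{\alpha}$ with $T^{-1} = (\tilde{t}_{i,j})_{K\times K}$. Extracting the $K$-th row gives $s_K = \sum_{j=1}^{K} \tilde{t}_{K,j}\,\alpha_j$, which reduces the corollary to a bound on the entries $\tilde{t}_{K,j}$ that is supplied by the preceding lemma.

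First I would apply the triangle inequality and pull out the maximum-norm factor, obtaining $|s_K| \le \sum_{j=1}^{K} |\tilde{t}_{K,j}|\,|\alpha_j| \le \|\vec{\alpha}\|_{\infty}\, \sum_{j=1}^{K} |\tilde{t}_{K,j}|$. The remaining task is therefore to control the single row sum $\sum_{j=1}^{K} |\tilde{t}_{K,j}|$, and this is exactly where Lemma \ref{lemma-3} enters. Taking $i=K$ in \eqref{tij-property-2-2} shows that the diagonal entry dominates its row, i.e.\ $|\tilde{t}_{K,j}| \le |\tilde{t}_{K,K}| = \max_{j}|\tilde{t}_{K,j}|$ for every $j$, while \eqref{tij-property-t11-tN-1} (more precisely, the chain of inequalities in its proof, which yields $|\tilde{t}_{K,K}|<1$) bounds that diagonal entry strictly below $1$.

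Combining these two facts gives $\sum_{j=1}^{K} |\tilde{t}_{K,j}| \le K\,|\tilde{t}_{K,K}| < K$, and hence $|s_K| \le K\,|\tilde{t}_{K,K}|\,\|\vec{\alpha}\|_{\infty} < K\,\|\vec{\alpha}\|_{\infty}$, which is precisely \eqref{lemma-3-eq-4}. I do not expect any genuine obstacle here, since the estimate is a direct corollary of Lemma \ref{lemma-3}; the only two points needing care are, first, to extract from the proof of that lemma the magnitude bound $|\tilde{t}_{K,K}|<1$ rather than merely the signed statement recorded in \eqref{tij-property-t11-tN-1}, and second, to observe that the final inequality is strict whenever $\vec{\alpha}\neq\vec{0}$ (the degenerate case $\vec{\alpha}=\vec{0}$ forcing $\vec{s}=\vec{0}$, so the strict bound is understood under the standing assumption that $\vec{\alpha}$ is nontrivial).
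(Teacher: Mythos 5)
Your proposal is correct and takes essentially the same route as the paper's own proof: from $s_K=\sum_{j=1}^{K}\tilde{t}_{K,j}\alpha_j$, bound the row sum by $K|\tilde{t}_{K,K}|$ using \eqref{tij-property-2-2} and conclude with \eqref{tij-property-t11-tN-1}. Your two refinements --- extracting the magnitude bound $|\tilde{t}_{K,K}|<1$ from the inequality chain in the lemma's proof (since \eqref{tij-property-t11-tN-1} is stated in signed form), and flagging that strictness presupposes $\vec{\alpha}\neq\vec{0}$ --- are sound points the paper leaves implicit.
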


\begin{proof}
Using  $\vec{s} = T^{-1}\vec{\alpha}$, \eqref{tij-property-2-2} and \eqref{tij-property-t11-tN-1}, we have
\begin{eqnarray*}
|s_{K}| = |\sum\limits_{j=1}^{K} \tilde{t}_{K,j} \alpha_j|
\le   \|\vec{\alpha}\|_{\infty} \sum\limits_{j=1}^{K} |\tilde{t}_{K,j}|
\le   \|\vec{\alpha}\|_{\infty} K |\tilde{t}_{K,K}|
<     \|\vec{\alpha}\|_{\infty} K,
\end{eqnarray*}
which completes the proof of \eqref{lemma-3-eq-4}.
\end{proof}

Using the above lemmas and corollary, we can get the estimation of $\|\vec{\bar{e}}_k^{(1)}\|_{\infty}(k=1,2,\cdots,N-1)$.
\begin{theorem}\label{theorem-1}
If $u \in W$, then
\begin{eqnarray}\label{lemma-3-eq-3-e-1-result}
\|\vec{\bar{e}}_{k}^{(1)}\|_{\infty}
\lesssim \frac{h^{\frac{3}{2}}}{k},~~k=1,2,\cdots, N-1,
\end{eqnarray}
and
\begin{eqnarray}\label{lemma-3-eq-4-e-2}
 |\bar{e}^{(1)}_{M-1,k}| \lesssim \frac{h^{\frac{7}{2}}}{\sin \frac{k\pi h}{2}|\cos (k\pi h)|},~~
 |\bar{e}^{(1)}_{M+2,k}| \lesssim \frac{h^{\frac{7}{2}}}{\sin \frac{k\pi h}{2}|\cos (k\pi h)|}.
\end{eqnarray}
\end{theorem}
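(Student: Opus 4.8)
The plan is to reduce every estimate to the decoupled one-dimensional systems \eqref{bareij-equation-result-1}, which split into a \emph{left} tridiagonal block on the indices $i=1,\dots,M-1$ (homogeneous data at $i=0$ and $i=M$) and a \emph{right} block on $i=M+2,\dots,N-1$ (homogeneous data at $i=M+1$ and $i=N$). First I would bound the transformed truncation error $\bar R_{i,k}$. Every index $i$ occurring on the right of \eqref{bareij-equation-result-1} corresponds to a node non-adjacent to the interface, so Theorem \ref{theorem-Rij}\,(1) gives $R_{i,j}=(\kappa^-)^{\delta_{1,k}}C^{(4)}(x_i,y_j)h^4+O(h^6)$, where $y\mapsto C^{(4)}(x_i,y)$ is of class $C^2$ (since $u\in W$ and the vertical line $x=x_i$ does not meet $\Gamma$). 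Applying Lemma \ref{lemma-4} to the smooth leading part, summing the $O(h^6)$ remainder crudely, and using \eqref{inequiv-sin}, I obtain $|\bar R_{i,k}|\lesssim h^{9/2}/\sin\frac{k\pi h}{2}\lesssim h^{7/2}/k$.

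Next I would normalise each block to the form \eqref{T-form}. Dividing the left block by $\kappa^-\cos(k\pi h)$ produces a system $T\vec s=\vec\alpha$ with $\beta=(2\kappa^-+h^2)/(\kappa^-\cos(k\pi h))$ and $\alpha_i=\bar R_{i,k}/(\kappa^-\cos(k\pi h))$; the right block is treated analogously with $\beta=(2+h^2)/\cos(k\pi h)$. A preliminary point is that $\cos(k\pi h)\neq 0$ for every $k=1,\dots,N-1$, because $N=2M+1$ is odd and hence $kh=\tfrac12$ is impossible. Since $|\cos(k\pi h)|\le 1$ while the diagonal entry exceeds $2\kappa^-$ (resp. $2$), we get $|\beta|>2$, so both Lemma \ref{lemma-3} and Corollary \ref{corollary-2} apply.

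For the uniform bound \eqref{lemma-3-eq-3-e-1-result} I would control $\|T^{-1}\|_\infty$. As $|\beta|>2$, the matrix $T$ is (after the sign change $\mathrm{diag}((-1)^i)$ needed when $\cos(k\pi h)<0$) an $M$-matrix, so comparing the row-sum vector defined by $T\vec v=\vec 1$ with the constant supersolution $(|\beta|-2)^{-1}\vec 1$ gives $\|T^{-1}\|_\infty\le(|\beta|-2)^{-1}$; the same bound can also be read off the explicit entries \eqref{tij-eq-3} via \eqref{tij-property-2-2}. Undoing the normalisation and using the identity $(|\beta|-2)\kappa^-|\cos(k\pi h)|=2\kappa^-(1-|\cos(k\pi h)|)+h^2$, I obtain for the left block
\[
\|\vec{\bar e}_k^{(1)}\|_\infty\le\frac{\|T^{-1}\|_\infty}{\kappa^-|\cos(k\pi h)|}\max_i|\bar R_{i,k}|\le\frac{\max_i|\bar R_{i,k}|}{2\kappa^-(1-|\cos(k\pi h)|)+h^2}\le\frac{\max_i|\bar R_{i,k}|}{h^2}\lesssim\frac{h^{3/2}}{k},
\]
and the right block is identical with $\kappa^-$ replaced by $1$. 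The delicate point, and the step I expect to be the main obstacle, is the near-degeneracy $|\beta|\to 2$, which occurs precisely when $|\cos(k\pi h)|\to 1$; retaining the $+h^2$ term in the denominator $2\kappa^-(1-|\cos(k\pi h)|)+h^2\ge h^2$ is exactly what keeps the estimate uniform across all frequencies $k$.

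Finally, for the boundary components \eqref{lemma-3-eq-4-e-2} I would invoke Corollary \ref{corollary-2}. In the left block $\bar e^{(1)}_{M-1,k}$ is the last unknown $s_K$ with $K=M-1\sim\tfrac{1}{2h}$, so keeping the sharper form $\max_i|\bar R_{i,k}|\lesssim h^{9/2}/\sin\frac{k\pi h}{2}$,
\[
|\bar e^{(1)}_{M-1,k}|<K\|\vec\alpha\|_\infty\lesssim\frac1h\cdot\frac{\max_i|\bar R_{i,k}|}{|\cos(k\pi h)|}\lesssim\frac{h^{7/2}}{\sin\frac{k\pi h}{2}\,|\cos(k\pi h)|}.
\]
For $\bar e^{(1)}_{M+2,k}$, the first unknown of the right block, the persymmetry of $T$ (equivalently, reversing the index order so that it becomes the last component) yields the identical bound. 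This establishes \eqref{lemma-3-eq-4-e-2} and completes the proof.
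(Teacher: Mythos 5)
Your proposal is correct and follows essentially the same route as the paper's proof: both reduce to the decoupled tridiagonal systems \eqref{bareij-equation-result-1}, bound $|\bar{R}_{i,k}|\lesssim h^{9/2}/\sin\frac{k\pi h}{2}$ via Theorem \ref{theorem-Rij} part 1) together with Lemma \ref{lemma-4} and \eqref{inequiv-sin}, and derive \eqref{lemma-3-eq-4-e-2} from Corollary \ref{corollary-2} with $K=M-1\sim \frac{1}{2}h^{-1}$ (the paper absorbs your normalization by setting $s_i:=\kappa^-\bar{e}^{(1)}_{i,k}\cos(k\pi h)$, which is equivalent). The only, immaterial, difference is in the uniform bound \eqref{lemma-3-eq-3-e-1-result}: you bound $\|T^{-1}\|_\infty\le(|\beta|-2)^{-1}$ by a constant supersolution (with the diagonal sign flip when $\cos(k\pi h)<0$), whereas the paper uses the linear barrier $E_j=(1+x_j)h^{-2}\max_i|\bar{R}_{i,k}|$ and a comparison theorem --- both arguments rest on exactly the same monotonicity structure and the same lower bound $2\omega(1-|\cos(k\pi h)|)+h^2\ge h^2$ for the effective diagonal excess, so the two proofs are interchangeable.
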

\begin{proof}
Noticing that the inequation of \eqref{lemma-3-eq-3-e-1-result} holds  if and only if
\begin{eqnarray}\label{lemma-3-eq-3-e-1}
\max\limits_{0\le i\le M}|\bar{e}_{i,k}^{(1)}|
 \lesssim \frac{h^{\frac{3}{2}}}{k}~\mbox{and}~
\max\limits_{M+1\le i\le N}|\bar{e}_{i,k}^{(1)}|
\lesssim \frac{h^{\frac{3}{2}}}{k}.
\end{eqnarray}

Now we turn to prove the first inequation of \eqref{lemma-3-eq-4-e-2} and \eqref{lemma-3-eq-3-e-1}, respectively, the second inequation of \eqref{lemma-3-eq-4-e-2} and \eqref{lemma-3-eq-3-e-1}
can be proved similarly.

For any given $k=1,2,\cdots,N-1$ and using \eqref{bareij-equation-result-1}, one can obtain that $\{\bar{e}_{i,k}^{(1)}\}_{i=0}^{M}$ satisfies the following difference equations
\begin{eqnarray}\label{bareij-equation-result-1-01-1}
\left\{\begin{array}{ll}
\bar{L}_h\bar{e}^{(1)}_{i,k} = \bar{R}_{i,k},&i=1,2,\cdots,M-1,\\
\bar{e}^{(1)}_{0,k}=\bar{e}^{(1)}_{M,k} = 0,
\end{array}\right.
\end{eqnarray}
where $\bar{L}_h\bar{e}^{(1)}_{i,k} := - \kappa^- \bar{e}^{(1)}_{i-1,k} \cos (k\pi h) + (2\kappa^- + h^2) \bar{e}^{(1)}_{i,k} - \kappa^- \bar{e}^{(1)}_{i+1,k}\cos (k\pi h)$.

Comparing \eqref{bareij-equation-result-1-01-1} with \eqref{lemma-3-eq-1}, and using Corollary \ref{corollary-2}, where $s_i := \kappa^- \bar{e}^{(1)}_{i,k} \cos (k\pi h)$, $|\beta| := |\frac{2\kappa^- + h^2}{\kappa^- \cos (k\pi h)}| > 2$, $\alpha_i := \bar{R}_{i,k}$ and $K:=M-1$, we get
\begin{eqnarray*}
|\kappa^- \bar{e}_{M-1,k} \cos(k\pi h)| = |s_{K}|  < K \|\vec{\alpha}\|_{\infty} < M \max\limits_{1\le i\le M-1}|\bar{R}_{i,k}|.
\end{eqnarray*}
Hence, by using $\kappa^- \ge 1$ (see \eqref{def-kappa}), we have
\begin{eqnarray}\label{lemma-5-3-eq-a}
|\bar{e}_{M-1,k}| < \frac{ M \max\limits_{1\le i\le M-1}|\bar{R}_{i,k}|}{\kappa^- |\cos (k\pi h)|}.
\end{eqnarray}

Noticing that $u\in W$, from \eqref{expand-baruij-y-e} and \eqref{Rij-equation-A}, we have
\begin{eqnarray}\label{barRik-estimate}
|\bar{R}_{i,k}|
\lesssim h^{\frac{9}{2}} |\sum \limits_{j=1}^{N-1} C^{(4)}(x_i,y_j)\sin (j \pi y_k)| + h^{\frac{11}{2}},~i=1,2,\cdots,M-1.
\end{eqnarray}
Using \eqref{def-Cij4}  and Lemma \ref{lemma-4},
we get
\begin{eqnarray*}
|\sum \limits_{j=1}^{N-1} C^{(4)}(x_i,y_j)\sin (j \pi y_k)| \lesssim \frac{1}{\sin \frac{k\pi h}{2}},~~i=1,2,\cdots,M-1.
\end{eqnarray*}
Substituting this into \eqref{barRik-estimate}, we can obtain
\begin{eqnarray}\label{inequiv-barRik}
|\bar{R}_{i,k}| \lesssim \frac{h^{\frac{9}{2}}}{\sin \frac{k\pi h}{2}},~i=1,2,\cdots,M-1.
\end{eqnarray}

Using \eqref{lemma-5-3-eq-a} and \eqref{inequiv-barRik}, we have the first inequation of  \eqref{lemma-3-eq-4-e-2} is established.

Let
\begin{eqnarray}\label{def-Ej}
E_j = (1 +x_j) h^{-2} \max\limits_{1\le i\le M-1}|\bar{R}_{i,k}|, ~~j=0,1,\cdots,M.
\end{eqnarray}
Noticing that $\bar{L}_h (1+x_j) > h^2$, together with \eqref{def-Ej} and \eqref{bareij-equation-result-1-01-1}, we obtain
\begin{eqnarray}\label{lemma-5-3-eq-b}
\bar{L}_h E_j  >  \max\limits_{1\le i\le M-1}|\bar{R}_{i,k}|
\ge |\bar{L}_h \bar{e}_{j,k}^{(1)}|,~~j=1,2,\cdots,M-1.
\end{eqnarray}

Thus, using \eqref{lemma-5-3-eq-b}, and paying attention to  $E_0 > |\bar{e}^{(1)}_{0,k}|, E_M > |\bar{e}^{(1)}_{M,k}|$, by comparison theorem, we have
\begin{eqnarray}\label{barei-Ei-result}
|\bar{e}^{(1)}_{j,k}| < E_j  \lesssim h^{-2}\max\limits_{1\le i\le M-1}|\bar{R}_{i,k}|,~~j=1,2,\cdots,M-1.
\end{eqnarray}

Hence, from \eqref{barei-Ei-result}, \eqref{inequiv-barRik} and \eqref{inequiv-sin}, the first inequation of \eqref{lemma-3-eq-3-e-1} holds.

The proof is completed.

\end{proof}

Then, we will estimate $\|\vec{\bar{e}}_k^{(2)}\|_{\infty}(k=1,2,\cdots,N-1)$.

Using \eqref{equation-eij2-000},  \eqref{expand-baruij-y-e} and \eqref{expand-uij-y-e}, similar to the derivation process  of \eqref{bareij-equation-result-1}, we can obtain the 1D difference equations that $\vec{\bar{e}}_k^{(2)}(k=1,2,\cdots,N-1)$ satisfies as follows
{\small
\begin{eqnarray}\label{bareij-equation-result-2}
\left\{\begin{array}{l}
- \kappa^- \bar{e}^{(2)}_{i-1,k} \cos (k\pi h) + (2\kappa^- + h^2) \bar{e}^{(2)}_{i,k} - \kappa^- \bar{e}^{(2)}_{i+1,k}\cos (k\pi h) = 0,~~i=1,2,\cdots,M-1,\\
- \bar{e}^{(2)}_{i-1,k} \cos (k\pi h) + (2 + h^2) \bar{e}^{(2)}_{i,k} - \bar{e}^{(2)}_{i+1,k}\cos (k\pi h) = 0,~~i=M+2,M+3,\cdots,N-1,\\
- \kappa^- \bar{e}^{(2)}_{M-1,k} \cos (k\pi h) + (\kappa^- + \kappa^* + h^2) \bar{e}^{(2)}_{M,k} - \kappa^* \bar{e}_{M+1,k}^{(2)}\cos (k\pi h)
= \bar{\gamma}_{M,k}, \\
- \kappa^* \bar{e}^{(2)}_{M,k} \cos (k\pi h) + (\kappa^* + 1 + h^2) \bar{e}^{(2)}_{M+1,k} -  \bar{e}^{(2)}_{M+2,k}\cos (k\pi h)
= \bar{\gamma}_{M+1,k},\\
\bar{e}^{(2)}_{0,k} = \bar{e}^{(2)}_{N,k} = 0,
\end{array}\right.
\end{eqnarray}}
where
\begin{eqnarray}\label{def-gamma-m+1k}
  \bar{\gamma}_{M,k} = \bar{R}_{M,k} + \kappa^- \bar{e}^{(1)}_{M-1,k} \cos (k\pi h),~~
  \bar{\gamma}_{M+1,k} = \bar{R}_{M+1,k} +  \bar{e}^{(1)}_{M+2,k}\cos (k\pi h).
\end{eqnarray}


\begin{lemma}\label{lemma-bare-2}
If function $Z_j$ is defined by \eqref{Z-j}, then, for any given $k=1,2,\cdots, N-1$,  $\vec{\bar{e}}_k^{(2)}$ satisfies
\begin{eqnarray}\label{express-bare2-Mk}
\bar{e}^{(2)}_{M,k}
= \frac{\bar{\gamma}_{M,k}\delta_k(1) + \bar{\gamma}_{M+1,k}\kappa^*}{(\delta_k(\kappa^-)\delta_k(1) - (\kappa^*)^2) \cos(k\pi h)},~~
\bar{e}^{(2)}_{M+1,k}
= \frac{\bar{\gamma}_{M+1,k} + \kappa^* \cos (k\pi h) \bar{e}^{(2)}_{M,k}}{\delta_k(1)\cos (k\pi h)},
\end{eqnarray}
and
\begin{equation}\label{express-bare2-result}
\bar{e}^{(2)}_{i,k} = \left\{\begin{array}{ll}
\frac{Z_i(\lambda_k(\kappa^-))}{Z_M(\lambda_k(\kappa^-))} \bar{e}^{(2)}_{M,k}, & \mbox{if}~i=0,1,\cdots,M-1,\\
\frac{Z_{N-i}(\lambda_k(1))}{Z_M(\lambda_k(1))}\bar{e}^{(2)}_{M+1,k}, & \mbox{if}~i=M+2,M+3,\cdots,N,
\end{array}\right.
\end{equation}
where ($\omega = \kappa^-, 1$)
\begin{eqnarray}\label{def-lambda-function}
&& \lambda_k(\omega) =  sgn(\cos (k\pi h)) \left(\frac{1}{|\cos (k\pi h)|}(1 + \frac{h^2}{2\omega}) + \sqrt{\frac{1}{\cos^2 (k\pi h)}(1 + \frac{h^2}{2\omega})^2 - 1}\right),\\
\label{def-delta-function}
&& \delta_k(\omega) = \Theta_k(\omega)  + \frac{\kappa^* + h^2}{\cos (k\pi h)},~~\Theta_k(\omega) = \omega(\frac{1}{\cos (k\pi h)} - \frac{Z_{M-1}(\lambda_k(\omega))}{Z_M(\lambda_k(\omega))}).
\end{eqnarray}
\end{lemma}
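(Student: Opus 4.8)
The plan is to solve the 1D boundary value problem \eqref{bareij-equation-result-2} explicitly by exploiting its decoupled structure: the system splits into a left block ($i=0,\dots,M-1$) governed by the constant coefficient $\kappa^-$, a right block ($i=M+2,\dots,N$) governed by coefficient $1$, and two coupling equations at the interface nodes $i=M$ and $i=M+1$. First I would treat each of the two homogeneous three-term recurrences. On the left block, the recurrence $-\kappa^-\cos(k\pi h)\,\bar e^{(2)}_{i-1,k} + (2\kappa^-+h^2)\bar e^{(2)}_{i,k} - \kappa^-\cos(k\pi h)\,\bar e^{(2)}_{i+1,k}=0$ has characteristic equation $\lambda^2 - \beta\lambda + 1 = 0$ with $\beta = (2\kappa^-+h^2)/(\kappa^-\cos(k\pi h))$; solving it and selecting the root of modulus exceeding one gives exactly $\lambda_k(\kappa^-)$ in \eqref{def-lambda-function}. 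Using the Dirichlet condition $\bar e^{(2)}_{0,k}=0$ together with the basis functions $Z_j$ from \eqref{Z-j}, the general solution forced to vanish at $i=0$ is a scalar multiple of $Z_i(\lambda_k(\kappa^-))$; normalizing so the value at $i=M$ matches $\bar e^{(2)}_{M,k}$ yields the first branch of \eqref{express-bare2-result}. The right block is handled symmetrically, using $\bar e^{(2)}_{N,k}=0$ and the substitution $i\mapsto N-i$, producing $Z_{N-i}(\lambda_k(1))/Z_M(\lambda_k(1))$.

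Next I would use these closed forms to eliminate $\bar e^{(2)}_{M-1,k}$ and $\bar e^{(2)}_{M+2,k}$ from the two interface equations. From the left branch, $\bar e^{(2)}_{M-1,k} = \bigl(Z_{M-1}(\lambda_k(\kappa^-))/Z_M(\lambda_k(\kappa^-))\bigr)\bar e^{(2)}_{M,k}$, and analogously $\bar e^{(2)}_{M+2,k} = \bigl(Z_{M-1}(\lambda_k(1))/Z_M(\lambda_k(1))\bigr)\bar e^{(2)}_{M+1,k}$. Substituting these into the third and fourth lines of \eqref{bareij-equation-result-2} collapses the whole system to a $2\times 2$ linear system in the unknowns $\bar e^{(2)}_{M,k}$ and $\bar e^{(2)}_{M+1,k}$. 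The diagonal coefficients are precisely where the quantities $\Theta_k(\omega)$ and $\delta_k(\omega)$ in \eqref{def-delta-function} arise: after dividing through by $\cos(k\pi h)$, the coefficient of $\bar e^{(2)}_{M,k}$ in the first interface equation becomes $\delta_k(\kappa^-)$ and the coefficient of $\bar e^{(2)}_{M+1,k}$ in the second becomes $\delta_k(1)$, while the off-diagonal entries are $-\kappa^*$. Solving this $2\times 2$ system by Cramer's rule gives the determinant $(\delta_k(\kappa^-)\delta_k(1)-(\kappa^*)^2)\cos(k\pi h)$ and the numerators stated in \eqref{express-bare2-Mk}.

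The main obstacle I anticipate is bookkeeping rather than conceptual: one must verify that the root selection in $\lambda_k(\omega)$ is consistent across the sign of $\cos(k\pi h)$ (hence the $sgn(\cos(k\pi h))$ factor), and confirm that $|\beta|>2$ so that Lemma \ref{lemma-3}'s framework and the nonvanishing of $Z_M(\lambda_k(\omega))$ apply, guaranteeing the divisions are legitimate. A second delicate point is checking that the reduction of $\bar e^{(2)}_{M-1,k}$ via the ratio $Z_{M-1}/Z_M$ is algebraically identical to the $\Theta_k(\omega)$ term, so that the interface coefficients assemble exactly into $\delta_k(\omega)$; this requires carefully tracking the factor $\cos(k\pi h)$ through each substitution. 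Once the $2\times 2$ system is correctly assembled, solving it and back-substituting to recover \eqref{express-bare2-result} is routine, and I would close by noting that the expression for $\bar e^{(2)}_{M+1,k}$ in \eqref{express-bare2-Mk} follows directly from the second interface equation once $\bar e^{(2)}_{M,k}$ is known.
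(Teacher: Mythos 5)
Your proposal is correct and takes essentially the same route as the paper: solve the two homogeneous three-term recurrences on either side of the interface in the $Z_j$-basis (selecting the characteristic root $\lambda_k(\omega)$ of modulus greater than one and enforcing $\bar{e}^{(2)}_{0,k}=\bar{e}^{(2)}_{N,k}=0$ to get \eqref{express-bare2-result}), then eliminate $\bar{e}^{(2)}_{M-1,k}$ and $\bar{e}^{(2)}_{M+2,k}$ from the two interface equations to assemble the $2\times 2$ system with diagonal entries $\delta_k(\kappa^-)\cos(k\pi h)$, $\delta_k(1)\cos(k\pi h)$ and off-diagonal entries $-\kappa^*\cos(k\pi h)$, whose solution is \eqref{express-bare2-Mk}. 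The only cosmetic differences are that you invoke Cramer's rule explicitly and flag the sign/root-selection bookkeeping, which the paper handles implicitly by reusing the derivation of \eqref{tij-eq-3} from Lemma \ref{lemma-3}.
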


\begin{proof}
Using the former $2M-2$ equations of \eqref{bareij-equation-result-2}, and $\bar{e}^{(2)}_{0,k} = \bar{e}^{(2)}_{N,k} = 0$, we have
\begin{eqnarray}\label{bareij-equation-result-2-111}
\left\{\begin{array}{l}
(2\kappa^- + h^2) \bar{e}^{(2)}_{1,k} - \kappa^- \bar{e}^{(2)}_{2,k}\cos (k\pi h) = 0,\\
- \kappa^- \bar{e}^{(2)}_{i-1,k} \cos (k\pi h) + (2\kappa^- + h^2) \bar{e}^{(2)}_{i,k} - \kappa^- \bar{e}^{(2)}_{i+1,k}\cos (k\pi h) = 0,~i=2,3,\cdots,M-2,~~~~\\
- \kappa^- \bar{e}^{(2)}_{M-2,k} \cos (k\pi h) + (2\kappa^- + h^2) \bar{e}^{(2)}_{M-1,k} = \kappa^- \bar{e}^{(2)}_{M,k}\cos (k\pi h),
\end{array}\right.
\end{eqnarray}
and
\begin{eqnarray}\label{bareij-equation-result-2-002}
\left\{\begin{array}{l}
(2 + h^2) \bar{e}^{(2)}_{M+2,k} - \bar{e}^{(2)}_{M+3,k}\cos (k\pi h) = \bar{e}^{(2)}_{M+1,k} \cos (k\pi h),\\
- \bar{e}^{(2)}_{i-1,k} \cos (k\pi h) + (2 + h^2) \bar{e}^{(2)}_{i,k} - \bar{e}^{(2)}_{i+1,k}\cos (k\pi h) = 0,~~i=M+3,M+4,\cdots,N-2,\\
- \bar{e}^{(2)}_{N-2,k} \cos (k\pi h) + (2 + h^2) \bar{e}^{(2)}_{N-1,k} = 0.
\end{array}\right.
\end{eqnarray}

By using \eqref{bareij-equation-result-2-111}, similar to the derivation process of \eqref{tij-eq-3}($\beta:=\frac{2\kappa^- + h^2}{\kappa^- \cos (k\pi h)}$, $\lambda_{\beta}:=\lambda_k(\kappa^-)$), we have
\begin{equation*}
\bar{e}^{(2)}_{i,k} =
\frac{Z_i(\lambda_k(\kappa^-))}{Z_M(\lambda_k(\kappa^-))} \bar{e}^{(2)}_{M,k}, ~~i=0,1,\cdots,M-1.
\end{equation*}
Similarly, by \eqref{bareij-equation-result-2-002}, we have
\begin{equation*}
\bar{e}^{(2)}_{i,k} =
\frac{Z_{N-i}(\lambda_k(1))}{Z_M(\lambda_k(1))}\bar{e}^{(2)}_{M+1,k}, ~~i=M+2,M+3,\cdots,N,
\end{equation*}
which completes the proof of \eqref{express-bare2-result}.

Then using the $2M-1$-th and $2M$-th equation of \eqref{express-bare2-result}, one can obtain
\begin{eqnarray*}
\left\{\begin{array}{ll}
\delta_k(\kappa^-) \cos (k\pi h) \bar{e}^{(2)}_{M,k} -\kappa^* \cos (k\pi h) \bar{e}_{M+1,k}^{(2)} = \bar{\gamma}_{M,k}, \\
-\kappa^* \cos (k\pi h) \bar{e}^{(2)}_{M,k} + \delta_k(1) \cos (k\pi h) \bar{e}^{(2)}_{M+1,k} = \bar{\gamma}_{M+1,k},
\end{array}\right.
\end{eqnarray*}
where $\delta_k(\omega)$ is defined by \eqref{def-delta-function}.

Solving the above equations, we deduce that $\bar{e}^{(2)}_{M,k}$ and $\bar{e}^{(2)}_{M+1,k}$ satisfy \eqref{express-bare2-Mk}.
\end{proof}

In order to estimate $\vec{\bar{e}}_k^{(2)}$ by \eqref{express-bare2-Mk} and \eqref{express-bare2-result}, we introduce the following two lemmas.

\begin{lemma}\label{delta11-delta22-lemma}
\begin{eqnarray}\label{delta11-delta22}
&\delta_k(\kappa^-) \ge \delta_k(1) > \kappa^* + kh,&~~\mbox{if}~k=1,2,\cdots,M,\\\label{delta11-delta22-000}
&-\delta_k(\kappa^-) \ge  -\delta_k(1) > \kappa^* + (N-k)h,&~~\mbox{if}~k=M+1,M+2,\cdots,N-1.
\end{eqnarray}
\end{lemma}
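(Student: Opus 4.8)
The plan is to use the reflection symmetry $k\leftrightarrow N-k$ to reduce both inequalities to the single range $1\le k\le M$, and then to split the surviving claim \eqref{delta11-delta22} into a lower bound $\delta_k(1)>\kappa^*+kh$ and a monotonicity statement $\delta_k(\kappa^-)\ge\delta_k(1)$.

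First I would fix the sign of $\cos(k\pi h)$. Since $N=2M+1$, for $1\le k\le M$ we have $0<k\pi h\le \frac{M\pi}{2M+1}<\frac{\pi}{2}$, so $\cos(k\pi h)>0$, $sgn(\cos(k\pi h))=1$, and $\lambda_k(\omega)>1$; for $M+1\le k\le N-1$ we have $\cos(k\pi h)<0$. Putting $k'=N-k\in\{1,\dots,M\}$ and using $\cos((N-k)\pi h)=-\cos(k\pi h)$, a direct computation from \eqref{def-lambda-function} gives $\lambda_k(\omega)=-\lambda_{k'}(\omega)$; since $Z_j(-\lambda)=(-1)^jZ_j(\lambda)$ by \eqref{Z-j}, the ratio $Z_{M-1}/Z_M$ only changes sign, whence $\Theta_k(\omega)=-\Theta_{k'}(\omega)$ and therefore $\delta_k(\omega)=-\delta_{k'}(\omega)$. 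This converts \eqref{delta11-delta22-000} into \eqref{delta11-delta22} with $k$ replaced by $k'=N-k$, so it suffices to establish \eqref{delta11-delta22} for $1\le k\le M$.

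For that range write $c:=\cos(k\pi h)\in(0,1]$, $\lambda:=\lambda_k(\omega)>1$, and $r_M:=Z_{M-1}(\lambda)/Z_M(\lambda)$. The key elementary estimate is $r_M<1/\lambda$, immediate from $Z_M(\lambda)-\lambda Z_{M-1}(\lambda)=\lambda^{-M}(\lambda^2-1)>0$. Feeding this into $\Theta_k(\omega)=\omega(\tfrac1c-r_M)$, using $1/\lambda=\tfrac{\beta}{2}-\sqrt{(\tfrac{\beta}{2})^2-1}$ with $\tfrac{\beta}{2}=\tfrac1c(1+\tfrac{h^2}{2\omega})$ from \eqref{def-lambda-function}, and simplifying the resulting surd, I get $\Theta_k(\omega)>\tfrac1c\sqrt{\omega^2\sin^2(k\pi h)+\omega h^2+\tfrac{h^4}{4}}-\tfrac{h^2}{2c}$. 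Setting $\omega=1$ and adding $\tfrac{\kappa^*+h^2}{c}$ yields $\delta_k(1)>\tfrac1c\big(\sqrt{\sin^2(k\pi h)+h^2+\tfrac{h^4}{4}}+\tfrac{h^2}{2}+\kappa^*\big)>\tfrac1c(\sin(k\pi h)+\kappa^*)=\tan(k\pi h)+\tfrac{\kappa^*}{c}$. The lower bound then follows from $\tan x>x$ on $(0,\tfrac{\pi}{2})$, from $\pi>1$, and from $\tfrac{\kappa^*}{c}\ge\kappa^*$, giving $\delta_k(1)>k\pi h+\kappa^*>kh+\kappa^*$.

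The monotonicity $\delta_k(\kappa^-)\ge\delta_k(1)$ is where I expect the genuine difficulty. Rearranging \eqref{def-delta-function} gives $\delta_k(\omega)\,c=\omega c(1-r_M)+\omega(1-c)+\kappa^*+h^2$; here $\omega(1-c)$ is plainly nondecreasing, so the obstruction is the factor $Y(\omega):=\omega c(1-r_M)$, a product of the increasing factor $\omega$ with a factor that \emph{decreases} in $\omega$ (a larger $\omega$ lowers $\lambda$ and raises $r_M$). The closed form alone does not reveal its monotonicity, and the crude bound $r_M<1/\lambda$ leaves a remainder of order $\omega\lambda^{-(2M-1)}$ that is only $O(1)$ for small $k$, so a leading-order comparison of the two square roots is not enough. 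My resolution is to read $Y(\omega)$ as the input conductance at node $M$ of the grounded ladder underlying \eqref{bareij-equation-result-2-111}: the interior relation $(2\omega+h^2)\bar e_{i,k}=\omega\cos(k\pi h)(\bar e_{i-1,k}+\bar e_{i+1,k})$ encodes nearest-neighbour conductances $\omega c$ and node-to-ground conductances $g:=2\omega(1-c)+h^2$, all strictly increasing in $\omega$. An induction on the ladder recursion $A_m=\omega c(g+A_{m-1})/(\omega c+g+A_{m-1})$ (a series combination, monotone in each conductance) then shows $Y(\omega)=A_M$ is nondecreasing in $\omega$, the discrete analogue of Rayleigh's monotonicity law. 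Hence $\delta_k(\omega)$ is nondecreasing in $\omega$, and $\kappa^-\ge1$ gives $\delta_k(\kappa^-)\ge\delta_k(1)$, which together with the symmetry reduction finishes both \eqref{delta11-delta22} and \eqref{delta11-delta22-000}.
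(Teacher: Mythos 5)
Your proof is correct, and on the one genuinely delicate step it takes a different route from the paper. The bookkeeping parts essentially coincide: the paper proves only \eqref{delta11-delta22} and dismisses \eqref{delta11-delta22-000} with ``can be proved similarly,'' which your explicit reflection identity $\delta_k(\omega)=-\delta_{N-k}(\omega)$ (via $\cos(k\pi h)=-\cos((N-k)\pi h)$, $\lambda_k(\omega)=-\lambda_{N-k}(\omega)$ and $Z_j(-\lambda)=(-1)^jZ_j(\lambda)$) makes precise; and your lower bound is the paper's in disguise, since your surd $\frac{1}{c}\bigl(\omega^2\sin^2(k\pi h)+\omega h^2+\tfrac{h^4}{4}\bigr)^{1/2}$ is exactly the paper's $\eta(\omega)$ and your estimate $r_M<1/\lambda$ is equivalent to replacing the paper's prefactor $\frac{2}{1-\lambda^{-2M}}-1$ by its lower bound $1$ in \eqref{delta-11-trans-1-A}; you then close with $\tan x>x$ where the paper uses $\eta(1)>2kh$ and $\frac{h^2}{2\cos(k\pi h)}\le\frac{h}{2}$ --- interchangeable. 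The divergence is the monotonicity $\delta_k(\kappa^-)\ge\delta_k(1)$. Here your premise that ``the closed form alone does not reveal its monotonicity'' is not quite fair to the paper: its resummation \eqref{delta-11-trans-1-A} writes $\Theta_k(\omega)+\frac{h^2}{2\cos(k\pi h)}$ as the product of $\eta(\omega)$ (increasing in $\omega$) and $\frac{1+\lambda_k(\omega)^{-2M}}{1-\lambda_k(\omega)^{-2M}}>0$, which is also nondecreasing in $\omega$ because $\lambda_k(\omega)$ decreases (see \eqref{lambda-k1kappa-}); a product of two positive nondecreasing factors settles the matter with no remainder to control, so the paper's argument is both closed-form and complete. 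Your electrical-network alternative is nonetheless valid: the identification $Y(\omega)=A_M$ that you assert does hold and deserves the one-line check that $B_m:=\omega c\,(1-Z_{m-1}/Z_m)$ satisfies $B_1=\omega c$ and $B_m=\frac{\omega c\,(g+B_{m-1})}{\omega c+g+B_{m-1}}$, which follows from $\omega c\beta=2\omega c+g$ and the recurrence $Z_{m}=\beta Z_{m-1}-Z_{m-2}$ with $Z_m>0$ for $\lambda>1$; after that, induction with the monotone series--parallel map does give $Y(\kappa^-)\ge Y(1)$, hence $\delta_k(\kappa^-)\ge\delta_k(1)$ since $\delta_k(\omega)c=Y(\omega)+\omega(1-c)+\kappa^*+h^2$ and $c>0$. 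What your route buys is structural insight and robustness --- Rayleigh monotonicity would survive a variable coefficient $\omega_i$ varying node by node, where the paper's two-factor formula, tied to constant-coefficient solvability in powers of $\lambda_k(\omega)$, would not; what it costs is the extra identification lemma and the network vocabulary, versus the paper's shorter purely algebraic comparison.
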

\begin{proof}
We only give the proof of \eqref{delta11-delta22}, and \eqref{delta11-delta22-000} can be proved similarly.

First of all, we have to prove
\begin{eqnarray}\label{delta11-delta22-step1}
\delta_k(\kappa^-) \ge \delta_k(1),~~k=1,2,\cdots,M.
\end{eqnarray}
In fact, from \eqref{def-delta-function}, we know that if we can prove
\begin{eqnarray}\label{delta11-delta22-step1-1}
\Theta_k(\kappa^-) \ge \Theta_k(1),~~k=1,2,\cdots,M,
\end{eqnarray}
then \eqref{delta11-delta22-step1} follows immediately.

Using \eqref{def-lambda-function} and
\begin{eqnarray}\label{cos-equ-k}
0 < \cos (k\pi h)< 1,~~k=1,2,\cdots,M,
\end{eqnarray}
we have
\begin{eqnarray}\label{def-inv-coskpih}
\frac{\omega}{\cos (k\pi h)} = \omega \lambda_k(\omega)  - \frac{h^2}{2 \cos (k\pi h)} - \eta(\omega),
\end{eqnarray}
where $\eta(\omega) = \sqrt{(\frac{2\omega + h^2}{2 \cos (k\pi h)})^2 - \omega^2}$.

Hence, from \eqref{def-delta-function} and \eqref{def-inv-coskpih}, we have
\begin{eqnarray}\label{Thet-k-omega-equ}
\Theta_k(\omega)
= \omega(\lambda_k(\omega) - \frac{Z_{M-1}(\lambda_k(\omega))}{Z_M(\lambda_k(\omega))}) - \frac{h^2}{2 \cos (k\pi h)} - \eta(\omega).
\end{eqnarray}

By \eqref{Z-j} and \eqref{def-lambda-function},
it follows that
\begin{eqnarray}\label{delta-11-trans-1}
w(\lambda_k(\omega) - \frac{Z_{M-1}(\lambda_k(\omega))}{Z_M(\lambda_k(\omega))})
= w \frac{\lambda_k(\omega) - (\lambda_k(\omega))^{-1}}{1 - (\lambda_k(\omega))^{-2M}}
= \frac{2 \eta(\omega)}{1 - (\lambda_k(\omega))^{-2M}}.
\end{eqnarray}
Thus, substituting \eqref{delta-11-trans-1} into \eqref{Thet-k-omega-equ}, we have
\begin{eqnarray}\label{delta-11-trans-1-A}
\Theta_k(\omega)= (\frac{2}{1 - (\lambda_k(\omega))^{-2M}} - 1) \eta(\omega) - \frac{h^2}{2 \cos (k\pi h)}.
\end{eqnarray}

From \eqref{def-lambda-function} and $\kappa^- \ge 1$, one can obtain
\begin{eqnarray}\label{lambda-k1kappa-}
&\lambda_k(1) \ge \lambda_k(\kappa^-) >1,~~k=1,2,\cdots,M.
\end{eqnarray}
Therefore, by using \eqref{delta-11-trans-1-A}, \eqref{lambda-k1kappa-}, the characteristic that $\eta(\omega)$ is increasing monotone with $w>0$ and $\kappa^-\ge 1$, we have \eqref{delta11-delta22-step1-1} established.

Then, we want to illustrate
\begin{eqnarray}\label{delta-22-trans-2-result}
\delta_k(1)  - \kappa^* > k h,~~k=1,2,\cdots,M.
\end{eqnarray}

Using \eqref{def-delta-function} and \eqref{cos-equ-k}, we get
\begin{eqnarray}\label{delta-22-trans-2}
\delta_k(1) - \kappa^* >  \Theta_k(1).
\end{eqnarray}

From \eqref{delta-11-trans-1-A} and $\lambda_k(1)>1$,
we have
\begin{eqnarray}\label{theta-k-1}
\Theta_k(1) > \eta(1) - \frac{h^2}{2 \cos (k\pi h)}.
\end{eqnarray}

For $k=1,2,\cdots,M$ and using \eqref{sin-inequiv-0}, it follows
\begin{eqnarray*}\nonumber
\frac{1}{\cos^2 (k\pi h)} >  1 + 4k^2h^2~~and~~\frac{1}{\cos (k\pi h)} \le  h^{-1},~~k=1,2,\cdots,M.
\end{eqnarray*}
From this, one can obtain
\begin{eqnarray}\label{delta-22-trans-2-R1-result}
\eta(1) \ge \sqrt{(1 + \frac{h^2}{2})^2 (1 + 4k^2h^2) - 1} > 2 k h~~ and ~~
\frac{h^2}{2 \cos (k\pi h)} &\le& \frac{h}{2}.
\end{eqnarray}

Combining \eqref{delta-22-trans-2}, \eqref{theta-k-1} and \eqref{delta-22-trans-2-R1-result}, we get
\eqref{delta-22-trans-2-result}.

In conclusion, using \eqref{delta11-delta22-step1} and \eqref{delta-22-trans-2-result}, we finish the proof of \eqref{delta11-delta22}.

\end{proof}

By using Lemma \ref{delta11-delta22-lemma}, it is obvious that
\begin{eqnarray}\label{deltak-delta1-kappa-000}
|\delta_k(\kappa^-)| \ge |\delta_k(1)| > \kappa^*.
\end{eqnarray}

\begin{lemma}\label{lemma-gammamm+1}
For any given $k=1,2,\cdots, N-1$, if $u \in W$, then
\begin{eqnarray}\label{def-gamma-m+1k-000}
&& |\bar{\gamma}_{M+1,k}|  \lesssim  \frac{h^{\frac{3}{2}}}{\sin \frac{k\pi h}{2}},\\\label{xq-cp:03-001-1}
&& |\bar{\gamma}_{M,k}\delta_k(1) + \bar{\gamma}_{M+1,k}\kappa^* |  \lesssim \frac{h^{\frac{3}{2}}}{\sin \frac{k\pi h}{2}}
( |\delta_k(1) - \kappa^*| + h |\delta_k(1)|).
\end{eqnarray}
Further, if the Assumption~{\bf I} is established, then
\begin{eqnarray}\label{def-gamma-m+1k-000-lemma-2}
&& |\bar{\gamma}_{M+1,k}|  \lesssim  \frac{h^{\frac{5}{2}}}{\sin \frac{k\pi h}{2}},\\\label{xq-cp:03-001-1-0002}
&& |\bar{\gamma}_{M,k}\delta_k(1) + \bar{\gamma}_{M+1,k}\kappa^* |  \lesssim
\frac{h^{\frac{5}{2}}}{\sin \frac{k\pi h}{2}} (|\delta_k(1) - \kappa^*| + h|\delta_k(1)|).
\end{eqnarray}
\end{lemma}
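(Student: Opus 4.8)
The plan is to reduce everything to the Fourier coefficients $\bar R_{M,k}$ and $\bar R_{M+1,k}$, estimate these by feeding the local truncation expansion of Theorem~\ref{theorem-Rij} term by term into Lemma~\ref{lemma-4}, and then assemble $\bar\gamma_{M,k}$ and $\bar\gamma_{M+1,k}$ from \eqref{def-gamma-m+1k} using the boundary-layer bounds of Theorem~\ref{theorem-1}. Concretely, writing $R_{M,j}=C_M^{(1)}(y_j)h+C_M^{(2)}(y_j)h^2+C_M^{(3)}(y_j)h^3+h^4 g_{M,j}$ with $g_{M,j}$ uniformly bounded (this is where $u\in W$ enters: the coefficient traces $C_M^{(m)}(\cdot)$ lie in $C^2([0,1])$ and the remainder is controlled by $\|u\|_{C^6}$), I would apply Lemma~\ref{lemma-4} to each of the three smooth coefficient sequences and bound the $h^4$ tail crudely by $\sqrt{2h}\,h^4\sum_j|g_{M,j}|\lesssim h^{7/2}$. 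Since $\sin\frac{k\pi h}{2}\le 1$, this yields $|\bar R_{M,k}|,|\bar R_{M+1,k}|\lesssim h^{3/2}/\sin\frac{k\pi h}{2}$.

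For the first bound \eqref{def-gamma-m+1k-000} I would simply insert this into $\bar\gamma_{M+1,k}=\bar R_{M+1,k}+\bar e^{(1)}_{M+2,k}\cos(k\pi h)$; by the second estimate of \eqref{lemma-3-eq-4-e-2} the boundary-layer term obeys $|\bar e^{(1)}_{M+2,k}\cos(k\pi h)|\lesssim h^{7/2}/\sin\frac{k\pi h}{2}$, which is absorbed, giving $|\bar\gamma_{M+1,k}|\lesssim h^{3/2}/\sin\frac{k\pi h}{2}$.

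The nontrivial bound is \eqref{xq-cp:03-001-1}, and the key observation — which I expect to be the crux — is the sign cancellation $C_{M+1}^{(1)}=-C_M^{(1)}$ recorded in \eqref{def-CM1}. Expanding $\bar R_{M,k}\delta_k(1)+\bar R_{M+1,k}\kappa^*$, the two leading $O(h)$ contributions are $\pm h\,A_k$ with $A_k=\sqrt{2h}\sum_j C_M^{(1)}(y_j)\sin(j\pi y_k)$, so they do not add but combine into $h\,A_k(\delta_k(1)-\kappa^*)$; this is exactly why $\delta_k(1)$ is replaced by $\delta_k(1)-\kappa^*$ in the stated bound. Using $|A_k|\lesssim h^{1/2}/\sin\frac{k\pi h}{2}$ gives the first summand $\frac{h^{3/2}}{\sin\frac{k\pi h}{2}}|\delta_k(1)-\kappa^*|$, while every remaining term carries an extra power of $h$ together with a factor $\delta_k(1)$ or the bounded constant $\kappa^*$; invoking $|\delta_k(1)|\ge\kappa^*\gtrsim 1$ from \eqref{deltak-delta1-kappa-000} lets me absorb the $\kappa^*$-terms and the two boundary-layer corrections $\kappa^-\bar e^{(1)}_{M-1,k}\cos(k\pi h)\delta_k(1)$ and $\bar e^{(1)}_{M+2,k}\cos(k\pi h)\kappa^*$ (each $\lesssim h^{7/2}|\delta_k(1)|/\sin\frac{k\pi h}{2}$, via \eqref{lemma-3-eq-4-e-2}) into the second summand $\frac{h^{3/2}}{\sin\frac{k\pi h}{2}}\,h|\delta_k(1)|$.

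Finally, under Assumption~{\bf I} the prefactor $\kappa^--\frac{\kappa^*}{2}-\frac{\kappa^*\kappa^-}{2}$ in \eqref{def-CM1} vanishes for $\kappa^*=\frac{2\kappa^-}{\kappa^-+1}$, so the $O(h)$ term disappears and Corollary~\ref{corollary-1} gives $R_{i,j}=\tilde C_i^{(2)}(y_j)h^2+C_i^{(3)}(y_j)h^3+O(h^4)$ with the analogous cancellation $\tilde C_{M+1}^{(2)}=-\tilde C_M^{(2)}$ from \eqref{corollary-1-eq2}. Repeating the two arguments above verbatim, but with the expansion now starting one order higher, upgrades each occurrence of $h^{3/2}$ to $h^{5/2}$ and yields \eqref{def-gamma-m+1k-000-lemma-2} and \eqref{xq-cp:03-001-1-0002}. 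The only points requiring care throughout are the uniform control of the Taylor remainders (guaranteed by $u\in W$) and the repeated use of $|\delta_k(1)|\gtrsim 1$ to dominate the lower-order terms.
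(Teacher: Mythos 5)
Your proposal is correct and follows essentially the same route as the paper: bound $\bar R_{M,k},\bar R_{M+1,k}$ by feeding the truncation-error expansion of Theorem~\ref{theorem-Rij} (resp.\ Corollary~\ref{corollary-1} under Assumption~{\bf I}) into Lemma~\ref{lemma-4}, exploit the cancellation $C_{M+1}^{(1)}=-C_M^{(1)}$ (resp.\ $\tilde C_{M+1}^{(2)}=-\tilde C_M^{(2)}$) so the leading terms pair into $(\delta_k(1)-\kappa^*)$ --- exactly the paper's $\beta_1$ estimate \eqref{def-beta-1-2} --- and absorb the boundary-layer corrections via \eqref{lemma-3-eq-4-e-2} and the lower-order terms via $|\delta_k(1)|>\kappa^*$ from \eqref{deltak-delta1-kappa-000}. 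No gaps; this matches the paper's proof step for step.
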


\begin{proof}

Using \eqref{expand-baruij-y-e}, \eqref{Rij-equation-B}, \eqref{def-CM1} and Lemma \ref{lemma-4}, similar to the proof of \eqref{inequiv-barRik} shows that
\begin{eqnarray}\label{inequiv-barRMplus1k-1}
|\bar{R}_{M+1,k}| \lesssim \frac{h^{\frac{3}{2}}}{\sin \frac{k\pi h}{2}}, ~~k=1,2,\cdots,N-1.
\end{eqnarray}

Similarly, if the Assumption~{\bf I} is established, by using \eqref{expand-baruij-y-e}, \eqref{corollary-1-eq1}, \eqref{corollary-1-eq2} and Lemma \ref{lemma-4}, we have
\begin{eqnarray}\label{inequiv-barRMplus1k-2}
|\bar{R}_{M+1,k}| \lesssim \frac{h^{\frac{5}{2}}}{\sin \frac{k\pi h}{2}}, ~~k=1,2,\cdots,N-1.
\end{eqnarray}

Therefore, from \eqref{def-gamma-m+1k}, \eqref{lemma-3-eq-4-e-2} and \eqref{inequiv-barRMplus1k-1}, we get \eqref{def-gamma-m+1k-000}. Similarly, using \eqref{inequiv-barRMplus1k-2}, we get \eqref{def-gamma-m+1k-000-lemma-2}.

The rest is to show that \eqref{xq-cp:03-001-1} and \eqref{xq-cp:03-001-1-0002} are established.

Using \eqref{def-gamma-m+1k} and \eqref{expand-baruij-y-e}, it follows
\begin{eqnarray}\label{lemma-bare2-Mk-eq-1}\nonumber
&& |\bar{\gamma}_{M,k}\delta_k(1) + \bar{\gamma}_{M+1,k}\kappa^*| \\\nonumber
&\le &  |\bar{R}_{M,k}\delta_k(1) + \bar{R}_{M+1,k}\kappa^*| +  \kappa^- |\bar{e}_{M-1,k}^{(1)}\cos (k\pi h)| |\delta_k(1)| + |\bar{e}_{M+2,k}^{(1)}\cos (k\pi h)| \kappa^*\\
&=& \beta_1 + \kappa^- |\bar{e}_{M-1,k}^{(1)}\cos (k\pi h)| |\delta_k(1)| + |\bar{e}_{M+2,k}^{(1)}\cos (k\pi h)| \kappa^*,
\end{eqnarray}
where
\begin{eqnarray}\label{def-beta-1}
\beta_1 = \sqrt{2h}|\sum\limits_{j=1}^{N-1} (R_{M,j} \delta_k(1) +  R_{M+1,j} \kappa^*) \sin (j\pi y_k)|.
\end{eqnarray}

By \eqref{lemma-3-eq-4-e-2} and Lemma \ref{delta11-delta22-lemma},
we have
\begin{eqnarray}\label{lemma-bare2-Mk-eq-3}
\kappa^- |\bar{e}_{M-1,k}^{(1)}\cos (k\pi h)| |\delta_k(1)| + |\bar{e}_{M+2,k}^{(1)}\cos (k\pi h)| \kappa^*
\lesssim |\delta_k(1)| \frac{h^{\frac{7}{2}}}{\sin \frac{k\pi h}{2}}.
\end{eqnarray}
Hence, substituting \eqref{lemma-bare2-Mk-eq-3} into \eqref{lemma-bare2-Mk-eq-1}, we can obtain
\begin{eqnarray}\label{lemma-bare2-Mk-eq-1-0002}
|\bar{\gamma}_{M,k}\delta_k(1) + \bar{\gamma}_{M+1,k}\kappa^*| \lesssim \beta_1 + |\delta_k(1)| \frac{h^{\frac{7}{2}}}{\sin \frac{k\pi h}{2}}.
\end{eqnarray}

Using \eqref{def-beta-1}, \eqref{Rij-equation-B}
and \eqref{def-CM1}, and noticing that \eqref{deltak-delta1-kappa-000}, we can obtain
\begin{eqnarray}\label{def-beta-1-2}\nonumber
\beta_1
&\lesssim& h^{\frac{3}{2}} |\delta_k(1) - \kappa^*| |\sum\limits_{j=1}^{N-1} C_{M}^{(1)}(y_j) \sin (j\pi y_k)| \\\nonumber
&&+ h^{\frac{5}{2}} |\sum\limits_{j=1}^{N-1} ( C_{M}^{(2)}(y_j) \delta_k(1)  +   C_{M+1}^{(2)}(y_j) \kappa^*)\sin (j\pi y_k)| + h^{\frac{5}{2}} |\delta_k(1)|\\\nonumber
&\le& h^{\frac{3}{2}} |\delta_k(1) - \kappa^*| |\sum\limits_{j=1}^{N-1} C_{M}^{(1)}(y_j) \sin (j\pi y_k)| \\
&&+ h^{\frac{5}{2}} |\delta_k(1)| (|\sum\limits_{j=1}^{N-1}  C_{M}^{(2)}(y_j) \sin (j\pi y_k)| +  |\sum\limits_{j=1}^{N-1}  C_{M+1}^{(2)}(y_j) \sin (j\pi y_k)| + h^{\frac{5}{2}} |\delta_k(1)|.~~
\end{eqnarray}
Further, if Assumption~{\bf I} holds, by similar derivation of \eqref{def-beta-1-2}, using \eqref{def-beta-1},
\eqref{corollary-1-eq1}, \eqref{corollary-1-eq2}, and noticing that \eqref{deltak-delta1-kappa-000} holds, we have
\begin{eqnarray}\label{def-beta-1-2-000}\nonumber
\beta_1
&\lesssim& h^{\frac{5}{2}} |\delta_k(1) - \kappa^*| |\sum\limits_{j=1}^{N-1} \tilde{C}_{M}^{(2)}(y_j) \sin (j\pi y_k)|\\
&& + h^{\frac{7}{2}} |\delta_k(1)| (|\sum\limits_{j=1}^{N-1} C_{M}^{(3)}(y_j) \sin (j\pi y_k)|  + |\sum\limits_{j=1}^{N-1}  C_{M+1}^{(3)}(y_j) \sin (j\pi y_k)|)  + h^{\frac{7}{2}} |\delta_k(1)|.~~~
\end{eqnarray}

Therefore, using \eqref{def-beta-1-2},
  \eqref{def-CM1}, \eqref{def-CM2}, \eqref{def-CM+12} and Lemma \ref{lemma-4}
we have
\begin{eqnarray}\label{lemma-bare2-Mk-eq-2}
&& \beta_1
\lesssim |\delta_k(1) - \kappa^*| \frac{h^{\frac{3}{2}}}{\sin \frac{k\pi h}{2}}
+ |\delta_k(1)|\frac{h^{\frac{5}{2}}}{\sin \frac{k\pi h}{2}}.
\end{eqnarray}

Similarly, if Assumption~{\bf I} is established,
from \eqref{def-beta-1-2-000}, \eqref{corollary-1-eq2}, \eqref{def-CM3}, \eqref{def-CM+13} and Lemma \ref{lemma-4}, we have
\begin{eqnarray}\label{lemma-bare2-Mk-eq-2-0002}
&& \beta_1
\lesssim |\delta_k(1) - \kappa^*| \frac{h^{\frac{5}{2}}}{\sin \frac{k\pi h}{2}} + |\delta_k(1)| \frac{h^{\frac{7}{2}}}{\sin \frac{k\pi h}{2}}.
\end{eqnarray}

Consequently, substituting \eqref{lemma-bare2-Mk-eq-2} and \eqref{lemma-bare2-Mk-eq-2-0002} into \eqref{lemma-bare2-Mk-eq-1-0002}, respectively, we have \eqref{xq-cp:03-001-1} and \eqref{xq-cp:03-001-1-0002} established.

\end{proof}

Using the above lemmas, we can get the estimation of $\|\vec{\bar{e}}^{(2)}_{k}\|_{\infty}(k=1,2,\cdots,N-1)$.
\begin{theorem}\label{theorem-2}
If $u \in W$, then
\begin{eqnarray}\label{eq-bar2-result-final}
\|\vec{\bar{e}}^{(2)}_{k}\|_{\infty}
\lesssim h^{\frac{1}{2}}  g(k),~~k=1,2,\cdots, N-1.
\end{eqnarray}
Further, if the Assumption~{\bf I} holds, then
\begin{eqnarray}\label{eq-bar2-result-final-0002}
\|\vec{\bar{e}}^{(2)}_{k}\|_{\infty}
\lesssim h^{\frac{3}{2}}  g(k),~~k=1,2,\cdots, N-1,
\end{eqnarray}
where
\begin{eqnarray}\label{eq-bar2-M-result-f}
g(k) &=& g(N - k) = g_1(k), ~k=1,2,\cdots,M, \\
\label{eq-bar2-M-result-f1}
 g_1(k) 
&=& \left\{\begin{array}{ll}
\frac{1}{k}, & \mbox{if}~k=1,2,\cdots,\lfloor \frac{N}{4} \rfloor,\\
\frac{1}{N-2k}, & \mbox{if}~ k=\lceil \frac{N}{4} \rceil,\lceil \frac{N}{4} \rceil+1,\cdots,M.
\end{array}\right.
\end{eqnarray}

\end{theorem}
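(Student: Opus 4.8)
The plan is to reduce the full vector bound to an estimate of the two \emph{interface} components $\bar{e}^{(2)}_{M,k}$ and $\bar{e}^{(2)}_{M+1,k}$, and then to propagate that bound to every index by the geometric-type relations of Lemma \ref{lemma-bare-2}. Indeed, \eqref{express-bare2-result} expresses $\bar{e}^{(2)}_{i,k}$ for $i\le M-1$ as $\frac{Z_i(\lambda_k(\kappa^-))}{Z_M(\lambda_k(\kappa^-))}\,\bar{e}^{(2)}_{M,k}$ and for $i\ge M+2$ as $\frac{Z_{N-i}(\lambda_k(1))}{Z_M(\lambda_k(1))}\,\bar{e}^{(2)}_{M+1,k}$. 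Since $|\lambda_k(\kappa^-)|,|\lambda_k(1)|>1$ and the indices $i,\,N-i$ lie in $\{0,1,\dots,M\}$, property \eqref{Zj-property-000} gives $|Z_i/Z_M|\le 1$ and $|Z_{N-i}/Z_M|\le 1$; hence $\|\vec{\bar{e}}^{(2)}_k\|_{\infty}\le\max\bigl(|\bar{e}^{(2)}_{M,k}|,|\bar{e}^{(2)}_{M+1,k}|\bigr)$. So it suffices to estimate the two interface values, for which I would use the closed forms \eqref{express-bare2-Mk}.

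For $\bar{e}^{(2)}_{M,k}$ I would bound numerator and denominator with the three preparatory lemmas. By Lemma \ref{delta11-delta22-lemma} one has, for $k\le M$, $\delta_k(\kappa^-)\ge\delta_k(1)>\kappa^*+kh>0$, so the denominator satisfies $\delta_k(\kappa^-)\delta_k(1)-(\kappa^*)^2\ge(\delta_k(1)-\kappa^*)(\delta_k(1)+\kappa^*)$, while the numerator is controlled by \eqref{xq-cp:03-001-1}. Combining these with $\kappa^*\gtrsim 1$ and $\delta_k(1)-\kappa^*>kh$, the $\delta_k(1)$-dependent factor collapses,
\[
\frac{|\delta_k(1)-\kappa^*|+h|\delta_k(1)|}{(\delta_k(1)-\kappa^*)(\delta_k(1)+\kappa^*)}\lesssim 1,
\]
giving $|\bar{e}^{(2)}_{M,k}|\lesssim \dfrac{h^{3/2}}{\sin\frac{k\pi h}{2}\,|\cos(k\pi h)|}$. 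Feeding $|\delta_k(1)|\gtrsim 1$ and the bound \eqref{def-gamma-m+1k-000} for $\bar{\gamma}_{M+1,k}$ into the second formula of \eqref{express-bare2-Mk} yields $|\bar{e}^{(2)}_{M+1,k}|\lesssim \dfrac{h^{3/2}}{\sin\frac{k\pi h}{2}\,|\cos(k\pi h)|}+|\bar{e}^{(2)}_{M,k}|$, i.e. the same bound.

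The crux, and the step I expect to be the main obstacle, is then to show $\dfrac{h}{\sin\frac{k\pi h}{2}\,|\cos(k\pi h)|}\lesssim g_1(k)$ for $1\le k\le M$; this is precisely what forces the two-piece definition of $g_1$ in \eqref{eq-bar2-M-result-f1}. The key observation is that the two trigonometric factors degenerate at \emph{opposite} ends of the range: $\sin\frac{k\pi h}{2}$ is small only for small $k$, whereas, writing $\cos(k\pi h)=\sin\!\bigl(\tfrac{\pi(N-2k)}{2N}\bigr)$, the cosine is small only for $k$ near $M$. I would therefore split at $k=\lceil N/4\rceil$. For $k\le N/4$ one has $\cos(k\pi h)\ge\cos\frac{\pi}{4}$, so only $\sin\frac{k\pi h}{2}\ge kh$ from \eqref{inequiv-sin} is needed, producing the factor $1/k$; for $k\ge N/4$ one has $\sin\frac{k\pi h}{2}\ge\sin\frac{\pi}{8}$, so only the bound $\cos(k\pi h)=\sin\!\bigl(\tfrac{\pi(N-2k)}{2N}\bigr)\ge(N-2k)h$, obtained from \eqref{sin-inequiv-0}, is needed, producing $\frac{1}{N-2k}$. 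Together these give $\|\vec{\bar{e}}^{(2)}_k\|_{\infty}\lesssim h^{1/2}g_1(k)$ for $k\le M$.

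Finally, for $k=M+1,\dots,N-1$ I would run the identical argument with $N-k$ playing the role of $k$, now invoking the companion estimate \eqref{delta11-delta22-000} of Lemma \ref{delta11-delta22-lemma}; this reproduces the symmetric value $g(k)=g_1(N-k)$ and establishes \eqref{eq-bar2-result-final}. The sharpened bound \eqref{eq-bar2-result-final-0002} under Assumption~{\bf I} follows by the same chain of inequalities after replacing \eqref{def-gamma-m+1k-000} and \eqref{xq-cp:03-001-1} by their improved counterparts \eqref{def-gamma-m+1k-000-lemma-2} and \eqref{xq-cp:03-001-1-0002}, which each carry one extra power of $h$ and thus upgrade $h^{1/2}$ to $h^{3/2}$ throughout.
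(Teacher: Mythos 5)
Your proposal is correct and follows essentially the same route as the paper's proof: reduce to the interface values $\bar{e}^{(2)}_{M,k},\bar{e}^{(2)}_{M+1,k}$ via \eqref{express-bare2-result} and \eqref{Zj-property-000}, bound them through the closed forms \eqref{express-bare2-Mk} together with Lemma \ref{delta11-delta22-lemma} and Lemma \ref{lemma-gammamm+1}, and conclude with the trigonometric case split (the paper handles $k\le M$ and $k\ge M+1$ simultaneously in \eqref{key-eq-1-1}--\eqref{key-eq-2-1} rather than by your symmetry reduction, but this is only organizational). In fact your explicit verification of $\frac{h}{\sin\frac{k\pi h}{2}|\cos (k\pi h)|}\lesssim g_1(k)$ by splitting at $k=\lceil N/4\rceil$, using $\cos(k\pi h)=\sin\bigl(\tfrac{\pi(N-2k)}{2N}\bigr)\ge (N-2k)h$, supplies the detail the paper leaves implicit in \eqref{def-tildeg} as ``easy to verify.''
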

\begin{proof}
We only present the proof of \eqref{eq-bar2-result-final}, and \eqref{eq-bar2-result-final-0002} can be proved similarly.

Note that $|\lambda_k(\omega)|>1(\omega = \kappa^-,1)$, by \eqref{express-bare2-result} and \eqref{Zj-property-000}, we have
\begin{eqnarray}\label{theorem-2-prof-key-1}
|\bar{e}^{(2)}_{i,k}| &\lesssim& \max\{|\bar{e}^{(2)}_{M,k}|, |\bar{e}^{(2)}_{M+1,k}|\},~~i=0,1,\cdots,M-1,M+2,M+3,\cdots,N.
\end{eqnarray}

Then, we will present the estimation of  $|\bar{e}_{M,k}^{(2)}|$ firstly.

Using \eqref{express-bare2-Mk} and \eqref{xq-cp:03-001-1}, we have
\begin{eqnarray}\label{express-bare2-Mk-inequiv-001}
|\bar{e}_{M,k}^{(2)}| \lesssim \frac{h^{\frac{3}{2}}}{\sin \frac{k\pi h}{2} |\cos (k\pi h)|}
(\frac{|\delta_k(1) - \kappa^*|}{|\delta_k(\kappa^-)\delta_k(1) - (\kappa^*)^2|} + \frac{h |\delta_k(1)|}{|\delta_k(\kappa^-)\delta_k(1) - (\kappa^*)^2|}).
\end{eqnarray}

By the first inequation of  \eqref{delta11-delta22} and \eqref{delta11-delta22-000}, we have
\begin{eqnarray*}
\frac{1}{|\delta_k(\kappa^-)\delta_k(1) - (\kappa^*)^2|}
\le  \frac{1}{|(\delta_k(1))^2 - (\kappa^*)^2|} = \frac{1}{|\delta_k(1) + \kappa^*| |\delta_k(1) - \kappa^*|}.
\end{eqnarray*}
From this, and using the second inequation of \eqref{delta11-delta22} and \eqref{delta11-delta22-000}, we can obtain
\begin{eqnarray}\label{key-eq-1-1}
\frac{|\delta_k(1) - \kappa^*|}{|\delta_k(\kappa^-)\delta_k(1) - (\kappa^*)^2|}
\le \frac{1}{|\delta_k(1) + \kappa^*|}
\lesssim \left\{\begin{array}{ll}
1, & \mbox{if}~k=1,2,\cdots,M,\\
\frac{1}{(N-k)h}, & \mbox{if}~k=M+1,M+2,\cdots,N-1,
\end{array}\right.
\end{eqnarray}
and
\begin{eqnarray}\label{key-eq-2-1}
\frac{|\delta_k(1)|}{|\delta_k(\kappa^-)\delta_k(1) - (\kappa^*)^2|} < \frac{1}{|\delta_k(1)| - \kappa^*} \lesssim
\left\{\begin{array}{ll}
\frac{1}{kh}, & \mbox{if}~k=1,2,\cdots,M,\\
\frac{1}{(N-k)h}, & \mbox{if}~k=M+1,M+2,\cdots,N-1.
\end{array}\right.
\end{eqnarray}

Substituting  \eqref{key-eq-1-1} and \eqref{key-eq-2-1} into \eqref{express-bare2-Mk-inequiv-001}, we have
\begin{eqnarray}\label{lemma-bare2-Mk-result-Wh}
&& |\bar{e}^{(2)}_{M,k}|
\lesssim h^{\frac{1}{2}} \tilde{g}(k),
\end{eqnarray}
where
\begin{eqnarray*}
\tilde{g}(k) &=&
\left\{\begin{array}{ll}
\frac{h}{\sin \frac{k\pi h}{2} |\cos (k\pi h)|}, & \mbox{if}~k=1,2,\cdots,M,\\
\frac{1}{(N-k) \sin \frac{k\pi h}{2} |\cos (k\pi h)|}, & \mbox{if}~k=M+1,M+2,\cdots,N-1.
\end{array}\right.~~~~~~
\end{eqnarray*}

Using the basic characteristics of trigonometric function, it is easy to verify
\begin{eqnarray}\label{def-tildeg}
\tilde{g}(k) \lesssim g(k),
\end{eqnarray}
where $g(k)$ is defined by \eqref{eq-bar2-M-result-f}.

Hence, substituting \eqref{def-tildeg} into \eqref{lemma-bare2-Mk-result-Wh}, we have
\begin{eqnarray}\label{lemma-bare2-Mk-result}
&& |\bar{e}^{(2)}_{M,k}|
\lesssim h^{\frac{1}{2}} g(k).
\end{eqnarray}

Secondly, by \eqref{express-bare2-Mk}, Lemma \ref{delta11-delta22-lemma}, \eqref{def-gamma-m+1k-000}, $\frac{h}{\sin \frac{k\pi h}{2}|\cos(k\pi h)|} \lesssim g(k)$ and \eqref{lemma-bare2-Mk-result}, we have
\begin{eqnarray}\label{eq-bare2-M+1}
 |\bar{e}^{(2)}_{M+1,k}|
 \le \frac{|\bar{\gamma}_{M+1,k}| + \kappa^* |\cos (k\pi h)|| \bar{e}^{(2)}_{M,k}|}{|\delta_k(1)||\cos (k\pi h) |}
 \lesssim \frac{1}{\sin \frac{k\pi h}{2} |\cos (k\pi h) |}h^{\frac{3}{2}}
 + |\bar{e}^{(2)}_{M,k}|
 \lesssim h^{\frac{1}{2}} g(k).
\end{eqnarray}

In conclusion, combining \eqref{theorem-2-prof-key-1}, \eqref{lemma-bare2-Mk-result} and \eqref{eq-bare2-M+1}, we finish the proof of \eqref{eq-bar2-result-final}.

\end{proof}

The following theorem gives the estimation of $\|\vec{e}^{(l)}\|_{\infty}(l=1,2)$.

\begin{theorem}\label{theorem-eij1}
If $u \in W$, then
\begin{eqnarray}\label{theorem-eij1-result}
\|\vec{e}^{(1)}\|_{\infty} \lesssim h^{2} |\ln h|, ~~\|\vec{e}^{(2)}\|_{\infty} \lesssim h |\ln h|.
\end{eqnarray}
Further, if the Assumption~{\bf I} holds, then
\begin{eqnarray}\label{theorem-eij2-result2}
\|\vec{e}^{(2)}\|_{\infty} \lesssim h^2 |\ln h|.
\end{eqnarray}
\end{theorem}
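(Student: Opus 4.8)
The plan is to recover the physical error components $\vec{e}^{(1)}$ and $\vec{e}^{(2)}$ from their discrete sine coefficients through the inverse transform \eqref{expand-uij-y-e}, and then to turn the frequency-wise estimates on $\|\vec{\bar{e}}_k^{(l)}\|_\infty$ supplied by Theorems \ref{theorem-1} and \ref{theorem-2} into maximum-norm bounds by summing over the index $k$. Since $|\sin(k\pi y_j)|\le 1$, \eqref{expand-uij-y-e} gives, for every $(i,j)\in S$,
\begin{equation*}
|e_{i,j}^{(l)}| \le \sqrt{2h}\sum_{k=1}^{N-1} |\bar{e}_{i,k}^{(l)}||\sin(k\pi y_j)| \le \sqrt{2h}\sum_{k=1}^{N-1} \|\vec{\bar{e}}_k^{(l)}\|_\infty,
\end{equation*}
whence $\|\vec{e}^{(l)}\|_\infty \le \sqrt{2h}\sum_{k=1}^{N-1}\|\vec{\bar{e}}_k^{(l)}\|_\infty$. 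The entire proof thus reduces to evaluating two scalar sums.

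For the first component I would insert the bound \eqref{lemma-3-eq-3-e-1-result} of Theorem \ref{theorem-1}, obtaining
\begin{equation*}
\|\vec{e}^{(1)}\|_\infty \lesssim \sqrt{2h}\sum_{k=1}^{N-1}\frac{h^{3/2}}{k} = \sqrt{2}\,h^2\sum_{k=1}^{N-1}\frac{1}{k}.
\end{equation*}
Recognizing $\sum_{k=1}^{N-1}1/k$ as a harmonic sum bounded by $1+\ln(N-1)$, and recalling $N=h^{-1}$ so that $\ln N$ is of order $|\ln h|$, yields $\|\vec{e}^{(1)}\|_\infty \lesssim h^2|\ln h|$, the first claim.

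For the second component I would insert \eqref{eq-bar2-result-final} (respectively \eqref{eq-bar2-result-final-0002} under Assumption~\textbf{I}), giving $\|\vec{e}^{(2)}\|_\infty \lesssim h\sum_{k=1}^{N-1} g(k)$ (respectively $\lesssim h^2\sum_{k=1}^{N-1} g(k)$). The decisive step is to establish $\sum_{k=1}^{N-1} g(k)\lesssim |\ln h|$. Using the symmetry $g(k)=g(N-k)$ from \eqref{eq-bar2-M-result-f}, this reduces to controlling $\sum_{k=1}^{M} g_1(k)$. On the low-frequency range $k\le \lfloor N/4\rfloor$ the term $g_1(k)=1/k$ is once more a harmonic sum of size $|\ln h|$; on the range $\lceil N/4\rceil \le k\le M$ the term $g_1(k)=1/(N-2k)$ is treated by the substitution $m=N-2k$, which runs through the odd integers from about $N/2$ down to $1$, so this contribution is again a (half) harmonic tail of size $|\ln h|$. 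Adding the two ranges gives $\sum_{k=1}^{M} g_1(k)\lesssim |\ln h|$, and therefore $\|\vec{e}^{(2)}\|_\infty\lesssim h|\ln h|$ in general and $\lesssim h^2|\ln h|$ under Assumption~\textbf{I}.

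The main obstacle is exactly that last sum: the two-piece structure of $g_1$ forces one to split the frequency band, and the apparently dangerous factor $1/(N-2k)$ near $k=M$ (where $N-2k\to 1$) must be identified as an odd-indexed harmonic tail rather than a term that diverges faster than logarithmically. Once this is seen, everything else is the routine bookkeeping of pushing the frequency-wise estimates through the inverse sine transform and collecting the factors of $\sqrt{2h}$.
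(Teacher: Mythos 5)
Your proposal is correct and follows essentially the same route as the paper: bounding $|e_{i,j}^{(l)}|$ by $\sqrt{2h}\sum_{k=1}^{N-1}\|\vec{\bar{e}}_k^{(l)}\|_\infty$ via the inverse sine transform, then inserting the frequency-wise estimates of Theorems \ref{theorem-1} and \ref{theorem-2} and showing $\sum_{k=1}^{N-1} g(k)\lesssim|\ln h|$ by splitting at $k\approx N/4$, exactly as in \eqref{theorem-5-3-equ-1} and \eqref{theorem-5-3-equ-2}. Your identification of the $1/(N-2k)$ piece as an odd-indexed harmonic tail (via $m=N-2k$ with $N=2M+1$ odd) is precisely the content of the paper's bound $\sum_{k=\lceil N/4\rceil}^{M} \frac{1}{N-2k}\lesssim\sum_{k=1}^{N-1}\frac{1}{k}$.
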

\begin{proof}
From \eqref{expand-uij-y-e}, we have
\begin{eqnarray}\label{theorem-5-3-equ-1}
|e_{i,j}^{(l)}|
\le \sqrt{2h} \sum\limits_{k=1}^{N-1} |\bar{e}_{i,k}^{(l)}|,~~l=1,2.
\end{eqnarray}

Using \eqref{eq-bar2-M-result-f}, we have
\begin{eqnarray}\label{theorem-5-3-equ-2}
\sum\limits_{k=1}^{N-1} g(k) = 2 (\sum\limits_{k=1}^{\lfloor\frac{N}{4}\rfloor}\frac{1}{k} + \sum\limits_{k=\lceil \frac{N}{4} \rceil}^{M}\frac{1}{N-2k})
\lesssim \sum\limits_{k=1}^{N-1}\frac{1}{k} \lesssim |\ln h|.
\end{eqnarray}

Therefore, substituting \eqref{lemma-3-eq-3-e-1-result}, \eqref{eq-bar2-result-final} and \eqref{eq-bar2-result-final-0002} into  \eqref{theorem-5-3-equ-1}, respectively, and using \eqref{theorem-5-3-equ-2}, we have \eqref{theorem-eij1-result} and \eqref{theorem-eij2-result2} established.

\end{proof}

Using Theorem \ref{theorem-eij1} and \eqref{decompose-eij}, we can obtain the main result of this paper.
\begin{theorem}\label{theorem-4}
If $u \in W$, then
\begin{eqnarray}\label{abs-eij-result}
\|\vec{e}\|_{\infty} \lesssim h |\ln h|.
\end{eqnarray}
Further, if the Assumption~{\bf I} holds, then
\begin{eqnarray}\label{abs-eij-result-0}
\|\vec{e}\|_{\infty} \lesssim h^2 |\ln h|.
\end{eqnarray}
\end{theorem}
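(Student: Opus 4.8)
The plan is to obtain Theorem \ref{theorem-4} directly from the additive decomposition of the error vector recorded in \eqref{decompose-eij}, combined with the componentwise bounds already established in Theorem \ref{theorem-eij1}. Because the maximum norm is subadditive, the first step is simply to write
\begin{eqnarray*}
\|\vec{e}\|_{\infty} = \|\vec{e}^{(1)} + \vec{e}^{(2)}\|_{\infty} \le \|\vec{e}^{(1)}\|_{\infty} + \|\vec{e}^{(2)}\|_{\infty},
\end{eqnarray*}
which reduces the entire statement to inserting the two estimates of \eqref{theorem-eij1-result}. There is no need to revisit the difference equations \eqref{equation-eij1} and \eqref{equation-eij2-000} themselves.

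For the unconditional bound \eqref{abs-eij-result}, I would substitute $\|\vec{e}^{(1)}\|_{\infty} \lesssim h^2|\ln h|$ and $\|\vec{e}^{(2)}\|_{\infty} \lesssim h|\ln h|$; as $h\to 0$ the second contribution dominates, so the sum is $\lesssim h|\ln h|$ and \eqref{abs-eij-result} follows. For the sharpened bound under Assumption~\textbf{I}, I would instead invoke \eqref{theorem-eij2-result2}, that is $\|\vec{e}^{(2)}\|_{\infty} \lesssim h^2|\ln h|$, together with the unchanged estimate $\|\vec{e}^{(1)}\|_{\infty} \lesssim h^2|\ln h|$ (whose proof in Theorem \ref{theorem-eij1} never uses Assumption~\textbf{I}); both contributions are then of order $h^2|\ln h|$, yielding \eqref{abs-eij-result-0}.

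Candidly, at this final assembly step there is no real obstacle: the triangle inequality is essentially the whole argument, and all of the analytic difficulty has already been absorbed into Theorem \ref{theorem-eij1}. The point worth emphasizing is where the work and the logarithmic factor actually originate. The $|\ln h|$ is absent from the individual frequency modes $\vec{\bar{e}}_k^{(l)}$, which are controlled modewise by \eqref{lemma-3-eq-3-e-1-result} and \eqref{eq-bar2-result-final}; it emerges only upon reassembling the inverse discrete sine transform in \eqref{theorem-5-3-equ-1} and summing the frequency weights through the harmonic estimate $\sum_{k=1}^{N-1} g(k) \lesssim |\ln h|$ of \eqref{theorem-5-3-equ-2}. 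Hence the genuinely delicate ingredients are the tridiagonal inverse bounds of Lemma \ref{lemma-3} and Corollary \ref{corollary-2}, the jump-term estimates of Lemma \ref{lemma-gammamm+1}, and that harmonic sum, all feeding into Theorem \ref{theorem-eij1}; once that theorem is available, Theorem \ref{theorem-4} is immediate.
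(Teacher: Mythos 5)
Your proposal is correct and is exactly the paper's argument: the paper obtains Theorem \ref{theorem-4} immediately from the decomposition \eqref{decompose-eij} and the bounds of Theorem \ref{theorem-eij1} via the triangle inequality, with \eqref{theorem-eij2-result2} replacing the $\vec{e}^{(2)}$ bound under Assumption~\textbf{I}. Your side remark that the estimate for $\vec{e}^{(1)}$ never uses Assumption~\textbf{I} is also accurate, since \eqref{equation-eij1} only involves the truncation errors $R_{i,j}$ at nodes in $S_1^I \cup S_2^I$, where \eqref{Rij-equation-A} applies unconditionally.
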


\begin{remark}
Although the above theoretical estimations just prove that
the Assumption~{\bf I} is the sufficient condition
for scheme \eqref{2D-MACH-5pt-schemes} to reach the optimal asymptotic order,
the numerical experiments (see Table \ref{table-7-0} and Table \ref{table-1} in the following section) show that the assumptions also seems to be necessary.
\end{remark}

\section{\label{sec:5}Numerical experiments}\setcounter{equation}{0}

In this section, some typical numerical experiments are carried out
to experimentally study the accuracy of the five-point MACH-like
scheme \eqref{2D-MACH-5pt-schemes}.

\begin{example}\label{Benchmark-Pb2-2D-3}
Consider the simplified model of \eqref{Benchmark-Eq1} and \eqref{Benchmark-Eq3}, where the diffusion coefficient $\kappa^- = 10^4$, the exact solution $u(x,y) = \sin(\pi x) \sin(\pi y) ((x - \frac{1}{2})/\kappa + 1)$ does not satisfy \eqref{assumption-equ}.
\end{example}

The experiment results for solving Example
\ref{Benchmark-Pb2-2D-3} are given in Table \ref{table-7-0}, where
$\tilde{\kappa}_1 = \frac{\kappa^- + 1}{2}$ and $\tilde{\kappa}_2 =
\frac{2\kappa^-}{\kappa^- + 1}$.
It can be
observed 
that if the exact solution does not satisfy the assumption condition
\eqref{assumption-equ}, no matter whether 
the diffusion coefficients in the multiple material cells use
harmonic averaging or not, this scheme can not reach the asymptotic
optimal error estimate $O(h^2|\ln h|)$ in the maximum norm.

\begin{table}
\footnotesize \caption{Results for $u(x,y) = \sin(\pi x) \sin(\pi y) ((x - \frac{1}{2})/\kappa + 1)$ and $\kappa^- = 10^4$.}\label{table-7-0} \centering
\begin{tabular}{{|c|c|c|c|c|}}\hline
\multirow{2}{*}{$N$}        &  \multicolumn{2}{c|}{$\kappa^* = \tilde{\kappa}_1$} &\multicolumn{2}{c|}{$\kappa^* = \tilde{\kappa}_2$}\\\cline{2-5}
               &   $\|\vec{e}\|_{\infty}$ & ratio & $\|\vec{e}\|_{\infty}$ & ratio \\\hline
$33$ &     1.65E-02& --&      4.64E-02& --\\
$67$ &     7.81E-03& 2.12&    2.21E-02& 2.09\\
$135$&     3.79E-03& 2.06&    1.08E-02& 2.05\\
$271$&     1.87E-03& 2.03&    5.35E-03& 2.02\\
\hline
\end{tabular}
\end{table}


\begin{example}\label{Benchmark-Pb2-2D}
Consider the simplified model of \eqref{Benchmark-Eq1} and \eqref{Benchmark-Eq3}, where the diffusion coefficient $\kappa^- = 10^6$, the exact solution $u(x,y) = \frac{1}{\kappa}\sin(\pi x)\sin(\pi y) (x-\frac{1}{2}) (y-1) y (1+x^2 +y^2)$ satisfies \eqref{assumption-equ}.
\end{example}

The correponding experiment results
given in Table \ref{table-1} show that only when the exact solution
satisfies the assumption condition \eqref{assumption-equ} and
$\kappa^* = \tilde{\kappa}_2$,  the scheme can reach  the asymptotic
optimal error estimate $O(h^2|\ln h|)$, otherwise, the scheme can
only reach $O(h|\ln h|)$.

\begin{table}
\footnotesize \caption{Results for $u(x,y) = \frac{1}{\kappa}\sin(\pi x)\sin(\pi y) (x-\frac{1}{2}) (y-1) y (1+x^2 +y^2)$ and $\kappa^- = 10^6$.}\label{table-1} \centering
\begin{tabular}{{|c|c|c|c|c|}}\hline
\multirow{2}{*}{$N$}       &  \multicolumn{2}{c|}{$\kappa^* = \tilde{\kappa}_1$} &\multicolumn{2}{c|}{$\kappa^* = \tilde{\kappa}_2$}\\\cline{2-5}
               &   $\|\vec{e}\|_{\infty}$ & ratio & $\|\vec{e}\|_{\infty}$ & ratio \\\hline
$33$ &  5.80E-03& --&    4.70E-04& --\\
$67$ &  2.85E-03& 2.04&  1.14E-04& 4.12\\
$135$&  1.41E-03& 2.02&  2.81E-05& 4.06\\
$271$&  7.02E-04& 2.01&  6.98E-06& 4.03\\
\hline
\end{tabular}
\end{table}

%



All the results above-mentioned verify the correctness of the error theories.

In addition, a series of numerical experiments have carried out for the more general nine-point scheme \eqref{dis-9pt-scheme}.
The experimental results are similar to those of the five-point scheme, but it is omitted on account of space limitation.

\section{Conclusions}\label{sec:6}

In this paper, a kind of vertex-centered MACH-like FVM for stationary
diffusion problems with interface is constructed, and the estimates
of the local truncation error and global error have been
established, then the theoretical results are verified by numerical
experiments. It's worth pointing out that, if the exact solution
does not satisfy the assumption condition \eqref{assumption-equ},
the five-point MACH-like scheme with harmonic averaging  can
not reach the asymptotic optimal error estimate $O(h^2|\ln h|)$ in
the maximum norm. In the future, we hope to generalize this work to
more complex diffusion problems and schemes, such as 3D diffusion
problems and other schemes.

\section*{Acknowledgements}
The authors would like to thank Dr. Cunyun Nie from Hunan Institute
of Engineering for his helpful comments and suggestions. This work
is supported by the National Natural Science Foundation of China
(Grant Nos. 11571293, 11601462 and 61603322), Hunan Provincial
Natural Science Foundation of China (Grant No. 2016JJ2129), and Open
Foundation of Guangdong Provincial Engineering Technology Research
Center for Data Science (Grant No. 2016KF03).

%

\begin{thebibliography}{10}
\expandafter\ifx\csname url\endcsname\relax
  \def\url#1{\texttt{#1}}\fi
\expandafter\ifx\csname urlprefix\endcsname\relax\def\urlprefix{URL }\fi
\expandafter\ifx\csname href\endcsname\relax
  \def\href#1#2{#2} \def\path#1{#1}\fi

\bibitem{1981DSK}
D.~S. Kershaw, Differencing of the diffusion equation in lagrangian
  hydrodynamic codes, J. Comput. Phys. 39~(2) (1981) 375--395.

\bibitem{2000FH}
F.~Hermeline, A finite volume method for the approximation of diffusion
  operators on distorted meshes, J. Comput. Phys. 160 (2000) 480--499.

\bibitem{2006JD}
J.~Droniou, R.~Eymard, A mixed finite volume scheme for anisotropic diffusion
  problems on any grid, Numer. Math. 105~(1) (2006) 35--71.

\bibitem{2007YGW}
G.~Yuan, Z.~Sheng, Analysis of accuracy of a finite volume scheme for diffusion
  equations on distorted meshes, J. Comput. Phys. 224 (2007) 1170--1189.

\bibitem{2012RAK}
R.~A. Klausen, A.~F. Stephansen, Convergence of multi-point flux approximations
  on general grids and media, Int. J. Numer. Anal. Model. 9~(3) (2012)
  584--606.

\bibitem{2014JD}
J.~Droniou, Finite volume schemes for diffusion equations: Introduction to and
  review of modern methods, Math. Mod. Meth. Appl. S. 24~(8) (2014) 1575--1619.

\bibitem{2008SYK}
S.~Kadioglu, R.~R. Nourgaliev, V.~A. Mousseau, A comparative study of the
  harmonic and arithmetic averaging of diffusion coefficients for non-linear
  heat conduction problems, Technical Report INL/EXT-08-13999, Idaho National
  Laboratory, Idaho Falls, Idaho 83415, 2008.

\bibitem{1999RE}
R.~Ewing, Z.~Li, T.~Lin, Y.~Lin, The immersed finite volume element methods for
  the elliptic interface problems, Math. Comput. Simulat. 50~(1) (1999) 63--76.

\bibitem{2015LZ}
L.~Zhu, Z.~Zhang, Z.~Li, An immersed finite volume element method for 2d pdes
  with discontinuous coefficients and non-homogeneous jump conditions, Comput.
  Math. Appl. 70 (2015) 89--103.

\bibitem{2006SS}
S.~Shu, H.~Y. Yu, Y.~Q. Huang, C.~Y. Nie, A symmetric finite volume element
  scheme on quadrilateral grids and superconvergence, Int. J. Numer. Anal. Mod.
  3~(3) (2006) 348--360.

\bibitem{2013NCY}
C.~Y. Nie, S.~Shu, H.~Y. Yu, J.~Wu, Superconvergence and asymptotic expansions
  for bilinear finite volume element approximations, Numer. Math. Theor. Meth.
  Appl. 6~(2) (2013) 408--423.

\bibitem{2013ML}
L.~Mu, R.~Jari, A recovery-based error estimate for nonconforming finite volume
  methods of interface problems, Appl. Math. Comput. 220 (2013) 63–--74.

\bibitem{2009MCR}
M.~Oevermann, C.~Scharfenberg, R.~Klein, A sharp interface finite volume method
  for elliptic equations on cartesian grids, J. Comput. Phys. 228 (2009)
  5184--5206.

\bibitem{2006MO}
M.~Oevermann, R.~Klein, A cartesian grid finite volume method for elliptic
  equations with variable coefficients and embedded interfaces, J. Comput.
  Phys. 219~(2) (2006) 749--769.

\bibitem{2001RE}
R.~Ewing, O.~Iliev, R.~Lazarov, A modified finite volume approximation of
  second-order elliptic equations with discontinuous coefficients, SIAM J. Sci.
  Comput. 23~(4) (2001) 1335--1351.

\bibitem{2001LR}
R.~Luce, S.~Perez, A finite volume scheme for an elliptic equation with
  heterogeneous coefficients. application to a homogenization problem, Appl.
  Numer. Math. 38~(4) (2001) 427--444.

\bibitem{2015WT}
T.~Wang, Z.~Zhang, A compact finite volume method and its extrapolation for
  elliptic equations with third boundary conditions, Appl. Math. Comput. 264
  (2015) 258--271.

\bibitem{2010LYH}
J.~Lv, Y.~Li, L2 error estimate of the finite volume element methods on
  quadrilateral meshes, Adv. Comput. Math. 33 (2010) 129–--148.

\bibitem{2012LYH}
J.~Lv, Y.~Li, Optimal biquadratic finite volume element methods on
  quadrilateral meshes, SIAM J. Numer. Anal. 50 (2012) 2379–--2399.

\bibitem{2014FEM}
C.~Nie, S.~Shu, H.~Yu, Q.~An, A high order composite scheme for the second
  order elliptic problem with nonlocal boundary and its fast algorithm, Appl.
  Math. Comput. 227 (2014) 212--221.

\bibitem{MHF1987}
M.~H. Frese, Mach2: A two-dimensional magnetohydrodynamic simulation code for
  complex experimental configurations, No. AMRC-R-874, Mission Research Corp.,
  Albuquerque, NM, 1987.

\bibitem{SHMP1993}
U.~Shumlak, T.~W. Hussey, G.~J. Marklin, R.~E. Peterkin, Mach3: A
  three-dimensional mhd code, Bull. Am. Phys. Soc. 38~(10).

\bibitem{s-0001}
U.~Shumlak, T.~W. Hussey, R.~E. Peterkin, Three-dimensional magnetic field
  enhancement in a liner implosion system, IEEE Trans. Plas. Sci. 23~(1) (1995)
  83--88.

\bibitem{PFS1998}
R.~E. Peterkin, M.~H. Frese, C.~R. Sovinec, Transport of magnetic flux in an
  arbitrary coordinate ale code, J. Comput. Phys. 140 (1998) 148--171.

\bibitem{l-0001}
M.~R. LaPointe, P.~G. Mikellides, High-power electromagnetic thruster being
  developed, Research \& Technology 2001, NASA/TM-2002-211333 (2002) 51--53.

\bibitem{m-0001}
P.~G. Mikellides, Modeling and analysis of a megawatt-class
  magnetoplasmadynamic thruster, J. Propul. Power 20~(2) (2004) 204--210.

\bibitem{2013DREW}
D.~Ahern, Investigation of the gem mpd thruster using the mach2
  magnetohydrodynamics code, University of Illinois at Urbana-Champaign, 2013.

\bibitem{1980LDY}
D.~Li, H.~Shui, M.~Tang, On the finite difference scheme of two-dimensional
  parabolic equation in a non-rectangular mesh, J. Numer. Methods Comput. Appl.
  4 (1980) 217--224.

\bibitem{2005SN}
S.~N. Barkas, An introduction to fast poisson solvers, Philips Journal Research
  37~(5--6) (2005) 231--264.

\bibitem{2007MV}
M.~V. Pham, F.~Plourde, S.~D. Kim, Strip decomposition parallelization of fast
  direct poisson solver on a 3d cartesian staggered grid, Int. J. Comput. Sci.
  Eng. 1~(3) (2007) 183--192.

\bibitem{2013KMX}
M.~X. Kan, G.~H. Wang, H.~L. Zhao, L.~Xie, Two-dimensional magneto-hydrodynamic
  simulations of magnetically accelerated flyer plates, High Power Laser and
  Particle Beams 8 (2013) 052.

\end{thebibliography}

\bibliographystyle{elsarticle-num}

\FloatBarrier

\end{document}